\theoremstyle{plain}
\newtheorem{theorem}{Theorem}[section]
\newtheorem{corollary}[theorem]{Corollary}
\newtheorem{proposition}[theorem]{Proposition}
\newtheorem{lemma}[theorem]{Lemma}
\theoremstyle{definition}
\newtheorem{remark}[theorem]{Remark}
\newtheorem*{remark-final}{Final remark}
\newtheorem{example}[theorem]{Example}
\newtheorem{definition}[theorem]{Definition}
 \DeclareMathOperator{\re}{Re}
 \DeclareMathOperator{\Id}{\mathrm{Id}}
\newcommand{\K}{\mathbb{K}}
\newcommand{\C}{\mathbb{C}}
\newcommand{\R}{\mathbb{R}}
\newcommand{\U}{\mathscr{U}}
\newcommand{\eps}{\varepsilon}
\renewcommand{\geq}{\geqslant}
\renewcommand{\leq}{\leqslant}
\begin{document}
\title{On Banach spaces whose group of isometries acts micro-transitively on the unit sphere}

\author[Cabello]{F\'{e}lix Cabello S\'anchez}
\address[Cabello]{Departamento de Matem\'{a}ticas and IMUEx, Universidad de Extremadura, 06071-Badajoz, Spain\newline
\href{http://orcid.org/}{ORCID: \texttt{0000-0003-0924-5189}  }}
\email{\texttt{fcabello@unex.es}}

\author[Dantas]{Sheldon Dantas}
\address[Dantas]{Department of Mathematics, Faculty of Electrical Engineering, Czech Technical University in Prague, Technick\'a 2, 166 27 Prague 6, Czech Republic
\newline
\href{http://orcid.org/0000-0001-8117-3760}{ORCID: \texttt{0000-0001-8117-3760}  }}
\email{\texttt{gildashe@fel.cvut.cz}}

\author[Kadets]{Vladimir Kadets}
\address[Kadets]{School of Mathematics and Computer Sciences, V.~N.~Karazin Kharkiv National University, pl.~Svobody~4, 61022~Kharkiv, Ukraine
\newline
\href{http://orcid.org/0000-0002-5606-2679}{ORCID: \texttt{0000-0002-5606-2679} }
}
\email{\texttt{v.kateds@karazin.ua}}

\author[Kim]{\\ Sun Kwang Kim}
\address[Kim]{Department of Mathematics, Chungbuk National University, 1 Chungdae-ro, Seowon-Gu, Cheongju, Chungbuk 28644, Republic of Korea \newline
	\href{http://orcid.org/0000-0002-9402-2002}{ORCID: \texttt{0000-0002-9402-2002}  }}
\email{\texttt{skk@chungbuk.ac.kr}}

\author[Lee]{Han Ju Lee}
\address[Lee]{Department of Mathematics Education, Dongguk University - Seoul, 04620 (Seoul), Republic of Korea \newline
\href{http://orcid.org/0000-0001-9523-2987}{ORCID: \texttt{0000-0001-9523-2987}  }
}
\email{\texttt{hanjulee@dongguk.edu}}

\author[Mart\'{\i}n]{Miguel Mart\'{\i}n}
\address[Mart\'{\i}n]{Departamento de An\'{a}lisis Matem\'{a}tico, Facultad de
 Ciencias, Universidad de Granada, 18071 Granada, Spain \newline
\href{http://orcid.org/0000-0003-4502-798X}{ORCID: \texttt{0000-0003-4502-798X} }
 }
\email{\texttt{mmartins@ugr.es}}

\begin{abstract}
We study Banach spaces whose group of isometries acts micro-transitively on the unit sphere. We introduce a weaker property, which one-complemented subspaces inherit, that we call uniform micro-semitransitivity. We prove a number of results about both micro-transitive and uniformly micro-semitransitive spaces, including that they are uniformly convex and uniformly smooth, and that they form a self-dual class. To this end, we relate the fact that the group of isometries acts micro-transitively with a property of operators called the pointwise Bishop-Phelps-Bollob\'{a}s property and use some known results on it. Besides, we show that if there is a non-Hilbertian non-separable Banach space with uniform micro-semitransitive (or micro-transitive) norm, then there is a non-Hilbertian separable one. Finally, we show that an $L_p(\mu)$ space is micro-transitive or uniformly micro-semitransitive only when $p=2$.
\end{abstract}

\date{June 21st, 2019}

\thanks{The first author was supported by DGICYT project MTM2016-6958-C2-1-P (Spain) and Consejer\'{\i}a de Educaci\'{o}n y Empleo, Junta de Extremadura program IB-16056. The second author was supported by the project OPVVV CAAS CZ.02.1.01/0.0/0.0/16\_019/0000778. The research of the third author is done in frames of Ukrainian Ministry of Science and Education Research Program 0118U002036, and it was partially supported by Spanish MINECO/FEDER projects MTM2015-65020-P and MTM2017-83262-C2-2-P. Fourth author was partially supported by Basic Science Research Program through the National Research Foundation of Korea(NRF) funded by the Ministry of Education, Science and Technology (NRF-2017R1C1B1002928). Fifth author was partially supported by Basic Science Research Program through the National Research Foundation of Korea (NRF) funded by the Ministry of Education, Science and Technology (NRF-2016R1D1A1B03934771). Sixth author partially supported by projects MTM2015-65020-P (MINECO/FEDER, UE) and PGC2018-093794-B-I00 (MCIU/AEI/FEDER, UE)}

\subjclass[2010]{Primary 46B04; Secondary 22F50, 46B20, 54H15}
\keywords{Banach space; Mazur rotation problem;  micro-transitivity; norm attaining operators; Bishop-Phelps-Bollob\'{a}s property.}

\maketitle

\thispagestyle{plain}

\section{Introduction}

Let $G$ be a (Hausdorff) topological group with identity $e$ and let $T$ be a (Hausdorff) topological space. By an \emph{action} of $G$ on $T$ we mean a continuous map $G\times T \longrightarrow T$, $(g,x)\longmapsto gx$, such that $ex=x$ for every $x\in T$ and  $g(hx)=(gh)x$ for every $g,h\in G$ and every $x\in T$. The action of $G$ on $T$ is \emph{transitive} if the orbit of every element is the whole space, that is, if $Gx=T$ for every $x\in T$ (if $U\subset G$, $Ux \coloneqq \{gx\colon g\in U\}$). The action is \emph{micro-transitive} if for every $x\in T$ and every neighborhood $U$ of $e$ in $G$, the set $Ux$ is a neighborhood of $x$ in $T$; in this case, the orbits of the elements are open (see \cite[Lemma 1]{Ancel1987} or \cite[Assertion 1 on p.~106]{KozlovChatyrko}) and they form a partition of the space $T$ into disjoint open sets. Therefore, when $T$ is connected, micro-transitivity implies transitivity. Conversely, there is a famous result of E.~G.~Effros \cite{Effros} saying that if $G$ is Polish (i.e.,\ separable completely metrizable) acting transitively on a second category separable topological space, then it acts micro-transitively, see \cite{Ancel1987} or \cite{vanMill}. Micro-transitive actions are also called \emph{open} actions, as to be micro-transitive is equivalent to the fact that the application $g\longmapsto gx$ is open from $G$ to $T$ for every $x\in T$ \cite[Assertion 1 in p.~106]{KozlovChatyrko}. We refer the reader to already cited \cite{Ancel1987,KozlovChatyrko,vanMill} and references therein for more information and background on micro-transitive actions.

A famous open problem of S. Mazur in Banach space theory asks whether a separable
Banach space whose group of surjective isometries acts transitively on its unit sphere has to be
isometrically isomorphic to a Hilbert space; see \cite[p.~151]{Banach}. To simplify the notation, it is usually said that the norm of a Banach space is \emph{transitive} if its group of surjective isometries acts transitively on its unit sphere. We refer the reader to the classical book \cite{Rolewicz} and the expository papers \cite{BecerraRodriguez2002,Cabello-1997} for an extensive account on Mazur's problem, and to \cite{DilworthRandria, Ferenczi-Rosendal, Ferenczi-Rosendal-2017, Rambla} and references therein for more recent results. In the finite-dimensional setting, Mazur question was solved very early in the affirmative (\cite{AMU}; see \cite[Corollary 2.42]{BecerraRodriguez2002} for a contemporary proof). For non-separable Banach spaces, the answer to Mazur's problem is known to be negative as, for instance, there are non-separable $L_p(\mu)$ spaces whose norms are transitive for $1\leq p <\infty$; see \cite[Proposition 9.6.7]{Rolewicz}. Actually, ultrapowers of $L_p[0,1]$ do the job, use \cite[Example~2.13 and Proposition 2.19]{BecerraRodriguez2002}, for instance. Moreover, every Banach space can be isometrically regarded as a subspace of a suitable transitive Banach space (see \cite[Corollary 2.21]{BecerraRodriguez2002}) and every dual space is one-complemented in some transitive space (see \cite[Proposition 2.22]{BecerraRodriguez2002}) and, therefore, there exist transitive Banach spaces failing the approximation property (see \cite[Corollary 2.23]{BecerraRodriguez2002}).

In this paper we deal with the ``microscopic'' version of the transitivity: we say that the norm of a Banach space is \emph{micro-transitive} if its group of surjective isometries acts micro-transitively on its unit sphere. This is the case of the norm of a Hilbert space. Observe that due to the comments on the first paragraph, micro-transitive norms are transitive (we may also use \cite[Proposition 2.6]{BecerraRodriguez2002}, for instance). Moreover if the norm of a Banach space $X$ is micro-transitive, then actually a uniform version of the property holds (see Proposition \ref{prop-micro-implies-uniform}): for every $\eps>0$, there is $\delta>0$ such that if $x,y\in X$ with $\|x\|=\|y\|=1$ satisfy $\|x-y\|<\delta$, then there is a linear surjective isometry $T\colon X\longrightarrow X$ such that $Tx=y$ and $\|T-\Id\|<\eps$, where $\Id$ is the identity operator on $X$. Due to this fact, we also introduce and study a weaker property, which we will call \emph{uniform micro-semitransitivity} (see Definition \ref{def:uniform-micro-semitransitivity}) relaxing the requirement of $T$ being an isometry to only require $\|T\|= 1$. Contrary to what happens with most classical transitivity properties, uniform micro-semitransitivity is inherited by 1-complemented subspaces.

We have devoted a considerable effort to prove that both micro-transitivity and uniform micro-semitransitivity are somehow ``separably determined'': if $X$ is a micro-transitive (respectively, uniform micro-semitransitive), then every separable subspace of $X$ is contained in a separable micro-transitive (respectively, uniform micro-semitransitive) subspace. This is also very different than the usual behaviour of transitive properties.

Further, we relate uniform micro-semitransitivity with a property called pointwise Bishop-Phelps-Bollob\'{a}s property (see Definition \ref{def:pointwiseBPBp}) which has something to do with norm-attaining operators. With all of these, we will be able to show that
every uniformly micro-semitransitive space is both uniformly smooth and uniformly convex, with modulus of convexity of power type.

Finally, we show that an $L_p(\mu)$-space is not uniformly micro-semitransitive unless it is Hilbertian. In the way of proving this result, we show that both micro-transitivity and uniform micro-semitransitivity pass from a norm to its dual.

\subsection*{Notation and terminology}\label{subsection-notation}
Given a Banach space $X$ over the field $\K$ ($=\R$ or $\C$), we write $B_X$ and $S_X$ to denote, respectively, the closed unit ball and the closed unit sphere of $X$; $X^*$ is the dual space of $X$ and $\mathcal{L}(X)$ is the space of bounded linear operators from  $X$ to itself. If $Y$ is another Banach space, $\mathcal{L}(X,Y)$ is the Banach space of all bounded linear operators from $X$ to $Y$.

A Banach space $X$ is said to be \emph{smooth} if the norm is G\^{a}teaux differentiable at every non-zero point of $X$ or, equivalently, if for every $x\in X\setminus \{0\}$ there is only one $x^*\in S_{X^*}$ such that $x^*(x)=\|x\|$. A space $X$ is said to be \emph{uniformly smooth} if its norm is Fr\'{e}chet differentiable uniformly on $S_X$. The \emph{modulus of convexity} of a Banach space $X$ is given by
$$
\delta_X(\eps)=\inf\left\{1-\left\|\frac{x+y}{2}\right\|\colon x,y\in B_X,\, \|x-y\|\geq \eps\right\} \qquad (0<\eps\leq 2).
$$
The space $X$ is said to be \emph{uniformly convex} if $\delta_X(\eps)>0$ for all $\eps\in (0,2]$. Recall that a Banach space $X$ is uniformly convex if and only if $X^*$ is uniformly smooth. We refer the reader to \cite[\S 9]{FHHMPZ} and \cite{Diestel} for background on uniform convexity and uniform smoothness.

\section{Micro-transitivity and related properties}
Our aim here is to present the announced definition of uniform micro-semitransitivity and some preliminary results, to prove that uniform micro-semitransitivity and micro-transitivity are somehow separably determined,  to recall some known results on the pointwise Bishop-Phelps-Bollob\'{a}s property, and to relate both properties.

\subsection{Uniform micro-semitransitivity}\label{subsect-uniform-micro-semitran}
We start this subsection by showing that a Banach space with micro-transitive norm actually fulfills a uniform version of the property.

\begin{proposition}\label{prop-micro-implies-uniform}
Let $X$ be a Banach space with micro-transitive norm. Then, there is a function $\beta\colon(0,2)\longrightarrow \R^+$ such that if $x,y\in S_X$ satisfy $\|x-y\|<\beta(\eps)$, then there is a surjective isometry $T\in \mathcal{L}(X)$ satisfying that $Tx=y$ and $\|T-\Id\|<\eps$.
\end{proposition}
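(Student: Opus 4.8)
The plan is to argue by contradiction, extracting a single bad neighborhood of $e$ in the isometry group $G$ from a hypothetical failure of the uniform statement, and then contradicting micro-transitivity at one carefully chosen point. Suppose the conclusion fails for some $\eps_0 > 0$: then for every $n \in \N$ there are $x_n, y_n \in S_X$ with $\|x_n - y_n\| < 1/n$ such that no surjective isometry $T$ with $\|T - \Id\| < \eps_0$ maps $x_n$ to $y_n$. The natural move is to try to ``transport'' all these pairs to a single base point. Micro-transitivity (hence transitivity, as noted in the introduction) gives, for each $n$, a surjective isometry $S_n$ with $S_n x_n = x_0$ for a fixed $x_0 \in S_X$; set $y_n' = S_n y_n$, so $\|x_0 - y_n'\| = \|S_n x_n - S_n y_n\| = \|x_n - y_n\| < 1/n$, and $y_n' \to x_0$. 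If some isometry $R$ with $\|R - \Id\| < \eps_0$ satisfied $R x_0 = y_n'$, then $T = S_n^{-1} R S_n$ would be a surjective isometry with $T x_n = y_n$; but conjugation need not preserve $\|\cdot - \Id\|$ on $\mathcal{L}(X)$, so this naive reduction does not immediately work. The key realization is therefore that one must work with a neighborhood of $e$ in $G$ in the \emph{strong operator topology} (in which $G$ is a topological group acting continuously on $S_X$), not with the operator-norm ball, and only convert back to a norm estimate at the very end.

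Concretely, I would fix $x_0 \in S_X$ and consider the neighborhood $U = \{g \in G : \|g x_0 - x_0\| < \eps_0\}$ of $e$ in $G$ (a SOT-neighborhood, hence a genuine neighborhood in the topological group $G$). By micro-transitivity, $U x_0$ is a neighborhood of $x_0$ in $S_X$, so there is $\eta > 0$ with $\{z \in S_X : \|z - x_0\| < \eta\} \subset U x_0$. Now here is the point where the argument must be made to actually produce an operator-norm estimate rather than just $\|g x_0 - x_0\| < \eps_0$: one needs the stronger fact that micro-transitivity lets us control $g$ on a whole finite set, or better, that the relevant neighborhood of $e$ can be taken inside $\{g : \|g - \Id\| < \eps_0\}$. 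Since $G$ need not be norm-bounded-ly nice, the clean way is to choose the neighborhood of $e$ as $V = \{g \in G : \|g - \Id\| < \eps_0\}$ \emph{first} — and check that this is indeed a neighborhood of $e$ in $G$. This is exactly the subtle step: $\|\cdot - \Id\|$ is in general only lower semicontinuous for the SOT on $G$, so $V$ need not be SOT-open. The resolution, and what I expect to be the heart of the proof, is to invoke that for the \emph{isometry group} the norm topology and the strong operator topology coincide on bounded pieces in the relevant sense — more precisely, that $g \mapsto \|g - \Id\|$ restricted to $G$ is controlled by $\|g x_0 - x_0\|$ when combined with micro-transitivity applied at several points.

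So the refined plan is: (1) Assume the conclusion fails for $\eps_0$, obtaining $x_n, y_n$ as above; (2) use transitivity to reduce to pairs $x_0, y_n'$ with $y_n' \to x_0$ and the property that no isometry $R_n$ with $\|R_n - \Id\| < \eps_0$ has $R_n x_0 = y_n'$; (3) show that micro-transitivity at $x_0$ \emph{together with} an auxiliary application of micro-transitivity (at a suitable second point, or via a compactness/renorming argument on $\mathcal{L}(X)$ using that isometries have norm $1$) upgrades the SOT-neighborhood $U x_0 \supset \{z : \|z-x_0\| < \eta\}$ into the statement: for all $z \in S_X$ with $\|z - x_0\| < \eta$ there is an isometry $g$ with $g x_0 = z$ and $\|g - \Id\| < \eps_0$; (4) for $n$ with $1/n < \eta$ this applies to $z = y_n'$, contradicting (2). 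The main obstacle, as indicated, is step (3): bridging from ``$g$ moves $x_0$ little'' to ``$g$ is close to $\Id$ in operator norm.'' I expect the author handles this either by noting that one may replace the single base point $x_0$ with a finite $\eps_0$-net of $S_X$ and apply micro-transitivity simultaneously (using that $G$ acts continuously so that $\bigcap_{k} \{g : \|gx_k - x_k\| < \eps_0/3\}$ is still a neighborhood of $e$), thereby controlling $\|g - \Id\|$ up to $\eps_0$ on the net and hence on all of $B_X$ by a standard $3\eps$ argument — this is the routine calculation I will not grind through here.
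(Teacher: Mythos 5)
Your proposal contains a genuine error at its pivot point, and the repair you sketch does not work. The claim that ``conjugation need not preserve $\|\cdot-\Id\|$'' is false in exactly the situation at hand: if $S$ is a \emph{surjective isometry} and $R$ is any operator, then $\|S^{-1}RS-\Id\|=\|S^{-1}(R-\Id)S\|=\|R-\Id\|$, because pre-composition with a surjective isometry and post-composition with an isometry both preserve the operator norm. So the ``naive reduction'' you dismiss is in fact valid, and it is precisely the paper's proof: fix $x_0\in S_X$, apply micro-transitivity to the neighborhood $U=\{S\ \text{onto isometry}:\|S-\Id\|<\eps\}$ of $\Id$ to get $\delta>0$ with $\{z\in S_X:\|z-x_0\|<\delta\}\subseteq Ux_0$; then for $\|x-y\|<\delta$ pick (by transitivity) an onto isometry $R$ with $Rx=x_0$, note $\|x_0-Ry\|=\|x-y\|<\delta$, take $S\in U$ with $Sx_0=Ry$, and set $T=R^{-1}SR$, which satisfies $Tx=y$ and $\|T-\Id\|=\|S-\Id\|<\eps$. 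Note also that your own step (2) — transporting the bad pairs to a single base point while keeping the ``no isometry within $\eps_0$ of $\Id$'' property — secretly uses this very conjugation invariance, so your refined plan is internally inconsistent with your stated objection.

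The detour through the strong operator topology is a misreading of the setting: in this paper micro-transitivity is used with the isometry group carrying the operator-norm topology (the proof takes the norm ball $\{S:\|S-\Id\|<\eps\}$ as the neighborhood of $\Id$), so there is nothing to ``upgrade'' from SOT-smallness to norm-smallness. Moreover, the upgrade you propose in step (3) would fail anyway: in an infinite-dimensional space $S_X$ is not totally bounded, so there is no finite $\eps_0$-net of $S_X$, and controlling $\|gx_k-x_k\|$ on finitely many points cannot yield an operator-norm estimate $\|g-\Id\|<\eps_0$ by any $3\eps$ argument. Finally, the contradiction/sequence framework is unnecessary; the statement follows directly as above once the conjugation identity is recognized.
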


\begin{proof}
Let $0<\eps<2$ and write $U=\{S\in \mathcal{L}(X)\colon S \text{ onto isometry},\ \|S-\Id\|<\eps\}$. Fix $x_0\in S_X$ and use the micro-transitivity of the norm to find $\delta>0$ such that
$$
\{x\in S_X\colon \|x-x_0\|<\delta\}\subseteq Ux_0.
$$
Now, given $x,y\in S_X$ with $\|x-y\|<\delta$, we find an onto isometry $R\in \mathcal{L}(X)$ such that $Rx=x_0$ (this is possible as micro-transitivity implies transitivity) and observe that $Ry\in S_X$ satisfies that
$$
\|x_0-Ry\|=\|Rx-Ry\|=\|x-y\|<\delta.
$$
Then, there is $S\in U$ such that $Sx_0=S(Rx)=Ry$. Now, the operator $T=R^{-1}SR$ is an onto isometry, $Tx=R^{-1}Sx_0=y$ and
\[
\|T-\Id\|=\|R^{-1}SR-R^{-1}R\|=\|S-\Id\|<\eps.\qedhere
\]
\end{proof}

Hilbert spaces are the touchstone of all transitivity problems. Let us take a look at the action of their isometry groups. Let $H$ be a real Hilbert space and $x,y\in S_H$. Then there is a surjective isometry $T$ of $H$ such that $y=Tx$ and $\|T-\Id\|=\|y-x\|$. To see this we may assume that $x$ and $y$ are linearly independent. Let $E$ be the two-dimensional subspace of $H$ spanned by $x$ and $y$ and let $R$ be the unique isometry on $E$ that rotates $x$ onto $y$ and has minimal angle. Let $T$ be the isometry that extends $R$ as the identity on the orthogonal complement of $E$. It is pretty clear that $\|T-\Id\|=\|y-x\|$ and also that $T-\Id$ has rank two. Hence real Hilbert spaces are micro-transitive with $\beta(\eps)=\eps$ for every $\eps\in (0,2)$; see \cite[Lemma~2.2]{AMS} for complex Hilbert spaces.

Proposition \ref{prop-micro-implies-uniform} motivates us to introduce the following definition.

\begin{definition}\label{def:uniform-micro-semitransitivity}
Let $X$ be a Banach space. We say that $X$ (or the norm of $X$) is \emph{uniformly micro-semitransitive} if there exists a function $\beta\colon(0,2)\longrightarrow \R^+$ such that whenever $x,y\in S_X$ satisfies $\|x-y\|<\beta(\eps)$, then there is $T\in \mathcal{L}(X)$ with $\|T\|=1$ satisfying $$Tx=y \qquad \text{and} \qquad \|T-\Id\|<\eps.$$
\end{definition}

Observe that Proposition \ref{prop-micro-implies-uniform} implies that micro-transitive norms are uniformly micro-semitransitive. The following observation is immediate.

\begin{remark}\label{Remark-one-complemented}
If the norm of a Banach space $X$ is uniformly micro-semitransitive and $Y$ is a one-complemented subspace of $X$, then the norm of $Y$ is uniformly micro-semitransitive.
\end{remark}

The name micro-semitransitivity comes from the fact that to get a micro-transitive norm from Definition \ref{def:uniform-micro-semitransitivity}, one just needs to further impose that $T$ is invertible (which is free for $0<\eps<1$) and that $\|T^{-1}\|=1$, so we need a half of the requirements.
It is worth to remark that if the norm of $X$ is uniformly micro-semitransitive, we may further get that the norm is ``transitive by norm-one isomorphisms'':

\begin{remark}\label{remark-exists-isomorphism}
{\slshape If the norm of a Banach space $X$ is uniformly micro-semitransitive, then given $x,y\in S_X$, there is an isomorphism $T\in \mathcal{L}(X)$ with $\|T\|=1$ such that $Tx=y$.}\ Indeed, we may find $x_1,\ldots,x_n\in S_X$ such that $x_1=x$, $x_n=y$, and $\|x_{i+1}-x_i\|<\beta(1/2)$ for $i=1,\ldots,n-1$. Then, by hypothesis, there are $T_1,\ldots,T_{n-1}\in \mathcal{L}(X)$ with $\|T_i\|=1$ satisfying that $\|T_i-\Id\|<1/2$ and $T_i(x_i)=x_{i+1}$ for $i=1,\ldots,n-1$. As all $T_i$ are isomorphisms by the Neumann series for $i=1,\ldots,n-1$, so is $T=T_{n-1}\circ\cdots\circ T_1$. Further, $\|T\|\leq 1$ and $Tx=y$ (so, in particular, $\|T\|=1$).
\end{remark}

Let us observe that, while it is conceivable that the property of the remark above (i.e.\ ``transitivity by norm-one isomorphisms'') characterizes Hilbert spaces among separable ones, it is immediate that all Banach spaces fulfill the following property: {\slshape given $x,y\in S_X$, there is $T\in \mathcal{L}(X)$ with $\|T\|=1$ such that $Tx=y$.} Indeed, consider $x^*\in S_{X^*}$ such that $x^*(x)=1$ and define the rank-one operator $T\in \mathcal{L}(X)$ by $Tz=x^*(z)y$ for all $z\in X$; then $\|T\|=1$ and $Tx=y$.
\medskip

\subsection{Micro-transitivity, separability, and completeness}
Mazur rotation problem is arguably very difficult: one can hardly imagine a way to produce a counterexample since completeness tends to interfere with separability in transitive spaces, and vice-versa. In the opposite direction, separability and completeness seem to be superfluous. See, however, \cite[Theorem~2]{Ferenczi-Rosendal-2017}.

All these difficulties disappear in our micro-transitivity setting. This is so because both  micro-transitivity and  uniform micro-semitransitivity are separably determined and they do not really depend on completeness, as we will see very soon.
\medskip

For a normed space $X$ we will use the name ``contractive automorphism of $X$'' for any linear operator $T \colon X \longrightarrow X$ with $\|T\| \leq 1$. Let $X$ be a normed space, $U$ a subset of the semigroup of contractive automorphisms of $X$ and $E$ a subset of the unit sphere of $X$, not necessarily $U$-invariant. Let us say, slightly bending the rules, that $U$ acts uniformly micro-semitransitively on $E$ if, for each $\eps>0$, there exists $\beta(\eps)>0$ such that, whenever $x,y\in E$ satisfy $\|x-y\|<\beta(\eps)$, there is $T\in U$ satisfying $Tx=y$ and $\|T-\Id\|<\eps.$

\begin{lemma}\label{lem:dense}
A Banach space is uniformly micro-semitransitive (respectively, micro-transitive) if and only if the semigroup of contractive automorphisms (respectively, the isometry group) acts uniformly micro-semitransitively on some dense subset of the unit sphere.
\end{lemma}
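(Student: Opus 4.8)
The plan is to prove both equivalences together, treating the contractive-automorphism case and the isometry case in parallel; the two directions (only if / if) require rather different ideas.

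The "only if" direction is trivial: if $X$ is uniformly micro-semitransitive, then the semigroup of contractive automorphisms acts uniformly micro-semitransitively on the whole sphere $S_X$, which is a dense subset of itself; similarly for micro-transitivity using Proposition~\ref{prop-micro-implies-uniform} to get the uniform function $\beta$. So the content is entirely in the "if" direction, and I will spend my effort there.

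For the "if" direction, suppose $U$ (the semigroup of contractive automorphisms, or the isometry group) acts uniformly micro-semitransitively on a dense subset $E\subseteq S_X$ with function $\beta_0$. Given $x,y\in S_X$ with $\|x-y\|$ small, I want to produce $T\in U$ with $Tx=y$ and $\|T-\Id\|<\eps$. The first step is an approximation step: pick $x',y'\in E$ very close to $x$ and $y$ respectively (using density of $E$), arranged so that $\|x'-y'\|<\beta_0(\eps/3)$, and obtain $S\in U$ with $Sx'=y'$ and $\|S-\Id\|<\eps/3$. Then $\|Sx-y\|\leq\|Sx-Sx'\|+\|Sx'-y'\|+\|y'-y\|$ is small (since $\|S\|\leq 1$ controls the first term), say less than some $\eta$ that we can make as small as we like by choosing $x',y'$ close enough. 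Now $Sx$ and $y$ are two points of $S_X$ that are $\eta$-close, with $Sx$ already obtained from $x$ by a near-identity map in $U$, but $Sx$ need not lie in $E$, so I cannot simply iterate. The key step is therefore a \emph{correction lemma}: given two points $u,v\in S_X$ with $\|u-v\|$ small, I must find $R\in U$ with $Ru=v$ and $\|R-\Id\|$ small, composing $R\circ S$ to finish. To build $R$, I again approximate $u,v$ by points $u',v'\in E$ with $\|u'-v'\|$ small, get $R_0\in U$ from the hypothesis with $R_0u'=v'$, and then the error $R_0u$ differs from $v$ by a controllably small amount — so I repeat. This generates a Cauchy sequence of elements of $U$ (in operator norm) whose product converges, by geometric-series / Neumann-type estimates, to the desired $R$; here I use that $U$ is closed under composition and that the operator-norm limit of contractive operators is contractive (and the operator-norm limit of surjective isometries near the identity is again a surjective isometry — invertibility is preserved by Neumann series, and the limit of isometries is an isometry, and closeness to $\Id$ keeps it surjective). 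Summing the errors geometrically gives $\|R-\Id\|<\eps/3$, so $T=R\circ S$ works, with the final $\beta$ for $X$ read off from $\beta_0$ and the density estimates.

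The main obstacle is the correction step: arranging the iteration so that the successive approximation errors shrink geometrically (rather than merely staying bounded), which requires being careful that at each stage the gap $\|u'-v'\|$ that we feed into the hypothesis is proportional to the current error, and that the $U$-element we obtain is correspondingly close to $\Id$ — one has to invoke $\beta_0$ at a scale tied to the current error and check that $\sum$ of the resulting $\eps$-bounds converges. A secondary point requiring care, in the micro-transitive case, is verifying that the infinite product of surjective isometries (each within a small ball of $\Id$) is again a surjective isometry rather than merely a contractive automorphism; this is where one uses that a norm-limit of isometries is an isometry and that an isometry within distance $1$ of $\Id$ is onto. Finally, the remark about completeness not mattering is implicit here: the whole argument runs inside the normed space and its completion only enters through the standard fact that operator-norm Cauchy sequences of bounded operators converge, which holds regardless.
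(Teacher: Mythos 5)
Your ``only if'' direction is fine, and so is the isometry (micro-transitive) half of your ``if'' direction, but the correction step in the contractive (uniformly micro-semitransitive) half has a genuine gap --- precisely the ``main obstacle'' you name without resolving, and in the form you set it up it cannot be resolved. After the first approximation you hold the point $u=Sx$, and since $S$ is only contractive and $x\notin E$, you may well have $\|Sx\|<1$ (the hypothesis controls $Sx'$, not $Sx$). At that moment the decomposition $T=R\circ S$ is already dead: any $R$ with $\|R\|\leq 1$ gives $\|R(Sx)\|\leq\|Sx\|<1=\|y\|$, so no further contraction can send $Sx$ \emph{exactly} onto $y$. The iterative correction fails for the same structural reason: the iterates $u_{n+1}=R_n\cdots R_0u$ have non-increasing norm, while $E\subseteq S_X$, so the approximation error $\|u_n-u_n'\|$ at each stage is bounded below by $1-\|u_n\|$, a quantity that never decreases along the iteration; hence the errors $e_n=\|u_n-y\|\geq 1-\|u_n\|$ can be kept small but never driven to $0$, and you only get $\|Tx-y\|$ small, not the required equality $Tx=y$. (In the isometry case the iterates stay on the sphere, the tolerances are genuinely free parameters, and your scheme does go through.)

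The paper's proof avoids ever applying an operator obtained from the hypothesis to a point that is off the dense set. First, for $x\in D$ and $y\in S_X$ it chooses in advance a chain $y_k,y_{k+1},\dots\in D$ converging to $y$ with $\|y_n-y_{n+1}\|$ summably small, invokes the hypothesis only at pairs of points of $D$ ($x\mapsto y_k$, then $y_{n-1}\mapsto y_n$), and composes on the left: each finite composite maps $x$ exactly to $y_m$, so the operator-norm limit maps $x$ to $y$. Then, for a general source $x\in S_X$, it chooses $x_n\in D$ with $x_n\to x$ and \emph{pre}-composes (composes on the right) operators with $L_k x_k=y$ and $L_n x_{n+1}=x_n$, so each finite composite maps $x_{n+1}$ exactly to $y$; since the composites are contractions converging in operator norm and $x_{n+1}\to x$, the limit maps $x$ to $y$. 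That rearrangement --- exact chains through $D$ plus right-composition to handle the source --- is what replaces your post-composed error-correction and is exactly what your contractive case is missing.
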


\begin{proof}
We only write the proof for uniform micro-semitransitivity. The other case follows the same lines.

Assume that $D$ is dense in $S_X$ and that $U$, the semigroup of all contractive automorphisms of $X$, acts uniformly micro-semitransitively on $D$.
\smallskip

\noindent\emph{Step 1.} For every
$\eps>0$, there exists $\delta(\eps)>0$ so that, if $x\in D$ and  $y\in S_X$ satisfy $\|x-y\|<\delta(\eps)$, then there is $T\in U$ such that $Tx=y$ and $\|T-\Id\|<\eps.$
\smallskip

For each $n\geq 1$ let $\delta_n=\beta(2^{-n})$ be the number provided by the definition. Needless to say, $\delta_n\leq 2^{-n}$. Pick $\eps>0$ and then $k$ so large that $\sum_{n\geq k}2^{-n}<\eps$.

Set $\delta=\beta(\eps)$ and assume that $\|x - y\| < \delta$. Choose a sequence of points in $D\cap B(x,\delta)$, labeled as $(y_n)_{n\geq k}$, so that $\|y_n-y\|<\delta_{n+1}/2$ and $\|y_n-y_{n+1}\|<\delta_{n+1}$. After that, take a sequence of contractive automorphisms $(T_n)_{n\geq k}$ such that
\begin{itemize}
\item $y_k=T_k(x)$ and $\|T_k-\Id\|<\eps$.
\item For $n>k$ one has $y_n=T_n(y_{n-1})$ and $\|T_n-\Id\|<2^{-n}$.
\end{itemize}
Let us see that the operator $T$ defined by
$$
T(z)=\lim_{m\to\infty}\bigl[T_m\circ\dots \circ T_{k+1}\circ T_k\bigr] (z) \qquad (z\in X),
$$
does the trick. Of course we have to check that the definition makes sense.

First of all, note that for $m>n\geq k$ one has the obvious estimate
$$
\bigl\|\bigl[T_m\circ\dots \circ T_{n}\bigr]-\Id\bigr\|\leq \sum_{i=n}^m\|T_i-\Id\|.
$$
This shows that the sequence defining $T(z)$ converges for all $z$, that $T$ is a contractive operator, and also that
$$
\|T-\Id\|\leq \eps+\sum_{i>k}2^{-i} \leq 2\eps.
$$
In particular, $T$ is onto for $0<\eps<{1\over 2}$ and since the equality $y=Tx$ is trivial, this concludes the proof of Step~1.
\smallskip

\noindent\emph{Step 2.} For every
$\eps>0$, there exists $\eta>0$, depending only on $\eps$, so that, if $x, y\in S_X$ satisfy $\|x-y\|<\eta$, then there is $T\in U$ such that $Tx=y$ and $\|T-\Id\|<\eps.$
\smallskip

This is just a rewording of Definition~\ref{def:uniform-micro-semitransitivity}. To prove it we proceed as before, using Step 1.

For each $n\geq 1$, set $\delta_n=\delta(2^{-n})$, where $\delta(\cdot)$ is the function provided by Step 1. Fix $\eps>0$ and take $k$ so large that $\sum_{n\geq k}2^{-n}<\eps$.

Set $\eta=\delta(\eps)$. Now assume $\|x-y\|<\eta$, with $x,y\in S_X$.
Choose a sequence of points in $D\cap B(y,\eta)$, labeled as $(x_n)_{n\geq k}$ so that $\|x_n-x\|<\delta_{n}/2$. In particular, $\|x_n-x_{n+1}\|<\delta_{n}$.  Take then a sequence of contractive automorphisms $(L_n)_{n\geq k}$ such that
\begin{itemize}
\item $L_k(x_k)=y$ and $\|L_k-\Id\|<\eps$.
\item For $n>k$ one has $L_n(x_{n+1})=x_n$ and $\|L_n-\Id\|<2^{-n}$.
\end{itemize}
Now, we consider the operator $L$ defined as
$$
L(z)=\lim_{n\to\infty}\bigl[L_k\circ L_{k+1}\dots\circ L_n\bigr] (z) \qquad (z\in X).
$$
Please note that this time we compose \emph{on the right}.

Using the estimate $\bigl\|\bigl[L_n\circ\dots \circ L_m\bigr]-\Id\bigr\|\leq \sum_{i=n}^m\|L_i-\Id\|$, it is easy to see that $L(z)$ is correctly defined, that $\|L\|\leq 1$, and also that $\|L-\Id\|<2\eps$. In particular $L$ belongs to $U$ provided $\eps<{1\over 2}$. It only remains to check that $y=Lx$, which is obvious after realizing the following facts:
\begin{itemize}
\item $\| L_k\circ\dots \circ L_n- L\|\to 0$ as $n\to\infty$.
\item For each $n>k$ one has $y= L_k\circ\dots \circ L_n(x_{n+1})$.
\item $\|x_n-x\|\to 0$ as $n\to\infty$.\vspace{-18pt}
\end{itemize}
\end{proof}

Thus, the completion of a uniformly micro-semitransitive \emph{normed} space is then a uniformly micro-semitransitive Banach space. The following result shows that uniformly micro-semitransitive spaces are ``somehow separably determined''.

 \begin{theorem}\label{thm:separable}
Let $X$ be a uniformly micro-semitransitive (respectively, micro-transitive) Banach space and $Y$ a separable subspace of $X$. Then there is another separable subspace of $X$ which is uniformly micro-semitransitive (respectively, micro-transitive) and contains $Y$.
 \end{theorem}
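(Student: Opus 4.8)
The plan is to carry out a L\"owenheim--Skolem-style closing-off construction: enlarging $Y$ if necessary we may assume $Y\neq\{0\}$, and then build an increasing chain of separable subspaces $Y=Z_0\subseteq Z_1\subseteq\cdots$ whose closed union $Z$ is stable under enough contractive automorphisms (respectively, surjective isometries) of $X$ that Lemma~\ref{lem:dense} applies to $Z$. The whole construction rests on the fact, guaranteed by Lemma~\ref{lem:dense}, that we need not control \emph{all} unit vectors of $Z$, but only a countable dense subset of $S_Z$; this is what allows the argument to finish in countably many steps.

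Fix the function $\beta\colon(0,2)\to\R^+$ of Definition~\ref{def:uniform-micro-semitransitivity} witnessing uniform micro-semitransitivity of $X$; in the micro-transitive case take instead the $\beta$ furnished by Proposition~\ref{prop-micro-implies-uniform}, so that the operators chosen below may be taken to be surjective isometries of $X$. Inductively construct $Z_n$ together with a countable dense set $D_n\subseteq Z_n$ with $D_n\subseteq D_{n+1}$, and a countable family $\mathcal{T}_n$ of contractive automorphisms of $X$ (respectively, of surjective isometries of $X$ together with their inverses), as follows. Given $Z_n$ and $D_n$, for every ordered pair $(u,v)$ of nonzero elements of $D_n$ put $x=u/\|u\|$ and $y=v/\|v\|$, and for each \emph{rational} $\eps\in(0,2)$ with $\|x-y\|<\beta(\eps)$ choose, using the hypothesis on $X$, an operator $T_{u,v,\eps}\in\mathcal{L}(X)$ with $\|T_{u,v,\eps}\|=1$ (a surjective isometry in the micro-transitive case), $T_{u,v,\eps}x=y$, and $\|T_{u,v,\eps}-\Id\|<\eps$. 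Let $\mathcal{T}_{n+1}$ consist of $\mathcal{T}_n$ together with all these new operators (and their inverses, in the micro-transitive case); this is again countable. Let $Z_{n+1}$ be the closed linear span of $Z_n\cup\{T(d)\colon T\in\mathcal{T}_{n+1},\,d\in D_n\}$, which is separable, and fix a countable dense set $D_{n+1}\supseteq D_n$ in it. Finally set $Z=\overline{\bigcup_n Z_n}$, a separable subspace of $X$ containing $Y$.

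The key --- and only genuinely delicate --- point is that every $T\in\bigcup_n\mathcal{T}_n$ maps $Z$ into $Z$. If $T\in\mathcal{T}_{n_0}$, then $T\in\mathcal{T}_n$ for all $n\geq n_0$, hence $T(D_n)\subseteq Z_{n+1}$ for all $n\geq n_0$; since $D_n$ is dense in $Z_n$, $T$ is continuous, and $Z_{n+1}$ is closed, this gives $T(Z_n)\subseteq Z_{n+1}\subseteq Z$ for all $n\geq n_0$; and because the chain is increasing, $\bigcup_{n\geq n_0}Z_n=\bigcup_n Z_n$ is dense in $Z$, so $T(Z)\subseteq Z$. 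Consequently $T|_Z$ is a contractive automorphism of $Z$ with $\|T|_Z-\Id\|\leq\|T-\Id\|$, and in the micro-transitive case, since $T^{-1}\in\bigcup_n\mathcal{T}_n$ too, $T|_Z$ is actually a surjective isometry of $Z$. This is precisely why at stage $n+1$ one re-applies \emph{all} operators selected so far to $D_n$, rather than only the ones selected at that stage: applying an operator just to the dense set of the step at which it was first chosen would not force it to stabilize the final space $Z$, and I expect getting this bookkeeping exactly right to be the main point of the proof.

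To conclude, let $D=\{w/\|w\|\colon w\in\bigcup_n D_n,\ w\neq 0\}$, which is dense in $S_Z$ because $\bigcup_n D_n$ is dense in $Z$. I claim the semigroup of contractive automorphisms of $Z$ (respectively, the isometry group of $Z$) acts uniformly micro-semitransitively on $D$; by Lemma~\ref{lem:dense} this yields that $Z$ is uniformly micro-semitransitive (respectively, micro-transitive), finishing the proof. Indeed, given $\eps>0$ choose a rational $\eps'\in(0,\min\{\eps,2\})$ and put $\beta'(\eps)=\beta(\eps')$. If $x,y\in D$ satisfy $\|x-y\|<\beta'(\eps)$, write $x=u/\|u\|$ and $y=v/\|v\|$ with $u,v$ nonzero elements of a common $D_n$ (possible since the $D_n$ increase); then $T_{u,v,\eps'}$ was selected at stage $n+1$, and by the previous paragraph $T_{u,v,\eps'}|_Z$ is a contractive automorphism (respectively, surjective isometry) of $Z$ satisfying $T_{u,v,\eps'}|_Z\,x=y$ and $\|T_{u,v,\eps'}|_Z-\Id\|<\eps'<\eps$, as required.
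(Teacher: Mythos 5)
Your proof is correct and follows essentially the same closing-off construction as the paper: an increasing chain of separable subspaces stabilized by a countable family of contractive automorphisms (resp.\ isometries and their inverses), finished off by Lemma~\ref{lem:dense} applied to a countable dense subset of the sphere of the limit space. The only differences are bookkeeping ones: you index the chosen operators by a rational grid of $\eps$'s and re-apply all previously selected operators at every stage, whereas the paper picks, for each pair of points, a sequence of operators nearly minimizing $\|T-\Id\|$ and closes the chosen operators into countable semigroups $G_n$ to get invariance --- both devices yield exactly the invariance and uniformity needed.
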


\begin{proof}
As before, we do the proof for uniform micro-semitransitivity.
The other case requires only minor changes.

Let $\beta\colon(0,2)\longrightarrow \R^+$ be any function witnessing that
$X$ is uniformly micro-semitransitive. We are going to construct three sequences
$(Y_n)_{n\geq 0},  (D_n)_{n\geq 0}, (G_n)_{n\geq 0}$ having the following properties:
\begin{enumerate}
\item $(Y_n)_{n\geq 0}$ is an increasing sequence of separable subspaces of $X$, with $Y_0=Y$.
\item Every $G_n$ is a countable semigroup of automorphisms of $X$.
\item $G_{n}\subset G_{n+1}$ for every $n\geq 1$.
\item Every operator in $G_n$ leaves $Y_n$ invariant.
\item $D_n$ is a countable, dense subset of the unit sphere of $Y_n$.
\item $D_{n}\subset D_{n+1}$ for every $n\geq 1$.
\item Given $\eps>0$, if $x,y\in D_n$ are such that $\|x-y\|<\beta(\eps)$, there is  a contractive $T\in G_{n+1}$ such that $y=Tx$ and $\|T-\Id\|<\eps$.
\end{enumerate}
We proceed by induction on $n\geq 0$. In the initial step we put $Y_0=Y$, the set $D_0$  is any countable, dense subset of the unit sphere of $Y$ and $G_0=\{\Id\}$.

The induction step is as follows. Denote the semigroup of contractive automorphisms of $X$ by $U$. Assume $Y_n, D_n, G_n$ have already been defined.
For each pair $x,y\in D_n$ we select $T_k\in U$ such that $y=T_kx$ for all $k$, and
$$
\|T_k-\Id\|<\inf\{\|T -\Id\|\colon y=Tx, T\in U\}+k^{-1}.
$$
Observe that we need only countably many automorphisms. Let $U_{n+1}$ be the (countable) semigroup generated by all these automorphisms and those of $G_n$.
Then we define $Y_{n+1}$ as the closed subspace of $Y$ spanned by the set
$$
\bigcup_{T\in G_{n+1}} T[Y_n].
$$
Clearly, $Y_{n+1}$ is separable and contains $Y_n$ since $\Id\in G_{n+1}$. By the very definition, every operator in $G_{n+1}$ leaves $Y_{n+1}$ invariant. Taking any countable dense subset $D_{n+1}$ of the unit sphere of $Y_{n+1}$ containing $D_n$ we close the circle.
\smallskip

To end, consider the ``limit objects''
$$
Y_\infty=\overline{\bigcup_{n\geq 0} Y_n},\qquad
G_\infty={\bigcup_{n\geq 0} G_n},\qquad
D_\infty={\bigcup_{n\geq 0} D_n}.
$$
Clearly, $Y_\infty$ is separable and contains $Y$ and $D_\infty$ is dense in the unit sphere of $Y_\infty$. Moreover, and this is the key point, every operator $T\in G_\infty$ leaves $Y_\infty$ invariant and so $T$ restricts to an automorphism of $Y_\infty$, with $\|T|_{Y_\infty}-\Id_{Y_\infty}\|\leq \|T-\Id_X\|$.

Finally, condition (7) guarantees that the semigroup of contractive automorphisms of $Y_\infty$ acts uniformly micro-semitransitively on $D_\infty$ and Lemma~\ref{lem:dense} ends the proof.
\end{proof}

The immediate consequence of the preceding Theorem~\ref{thm:separable} is that if every separable transitive Banach space is isometric (or isomorphic) to a Hilbert space, then
every micro-transitive Banach space is isometric (or isomorphic) to a Hilbert space since the parallelogram law and Kwapien's theorem \cite{Kw} say that  a Banach space is  isometric (or isomorphic) to a Hilbert space if and only if so are its separable subspaces.

\subsection{Ultrapowers}
Ultraproducts (and ultrapowers) provide a quite useful technique to deal with transitivity questions. Here we will only recall a few definitions, mainly to fix the notations. The interested reader is referred to Sims' notes \cite{Sims} or Heinrich's classic \cite{Heinrich} for two rather different, complementary expositions.

Let $X$ be a Banach space, let $I$ be a non-empty set, and let $\U$ be a nontrivial ultrafilter on $I$. The space
of bounded families $ \ell_\infty(I, X)$ endowed
with the supremum norm is a Banach space, and the space $ c_0^\U(X)\coloneqq
\{(x_i) \in \ell_\infty(I,X)\colon \lim_{\U} \|x_i\|=0\} $ is a
closed subspace of $\ell_\infty(I, X)$. The \emph{ultrapower} of the
space $X$ following $\U$ is defined as the quotient space
$$
[X]_\U = {\ell_\infty(I, X)}/{c_0^\U(X)},
$$
with the quotient norm. We denote by $[(x_i)]$ the element of $[X]_\U$ which has the family $(x_i)$ as a representative. It is easy to see that $ \|[(x_i)]\| = \lim_{\U} \|x_i\|. $  If $T_i\colon X\longrightarrow Y$ , $i\in I$ is a
uniformly bounded family of operators, the ultraproduct operator
$[T_i]_\U\colon [X]_\U\longrightarrow [Y]_\U$ is given by $[T_i]_\U[(x_i)]=
[T_i(x_i)]$. Quite clearly, $ \|[T_i]_\U\|= \lim_{\U}\|T_i\|.$ Also,  $[T_i]_\U$ is a surjective isometry when all the $T_i$'s are.

Regarding the subject of this paper, the most salient feature of ultrapowers is the following result whose proof is nearly trivial.

\begin{lemma}\label{lemma:ultrapowers}
Ultrapowers preserve both micro-transitivity and uniform micro-semitransitivity.
\end{lemma}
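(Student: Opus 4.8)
I would prove the two statements in parallel, giving details for uniform micro-semitransitivity and indicating the (verbatim) changes for micro-transitivity. The one preliminary point is to record the easy converse of Proposition~\ref{prop-micro-implies-uniform}: the existence of a function $\beta$ as in that proposition is \emph{equivalent} to micro-transitivity. Indeed, if such a $\beta$ exists then $\{y\in S_X\colon\|y-x\|<\beta(\eps)\}\subseteq\{T\colon\|T-\Id\|<\eps\}\,x$, and since every neighbourhood of $\Id$ in the isometry group (whether it carries the operator norm or the strong operator topology) contains a set of the form $\{T\colon\|T-\Id\|<\eps\}$, the orbit $Ux$ is a neighbourhood of $x$ for every such $U$. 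So I may work throughout with the ``$\beta$-formulation'' of both properties.

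Now fix $X$ uniformly micro-semitransitive with witnessing function $\beta$, a nontrivial ultrafilter $\U$ on a set $I$, and set $\widetilde\beta(\eps):=\beta(\eps/2)$ for $\eps\in(0,2)$; I claim $\widetilde\beta$ witnesses uniform micro-semitransitivity of $[X]_\U$. Take $u,v\in S_{[X]_\U}$ with $\|u-v\|<\widetilde\beta(\eps)$. The first step is to pick representatives $(x_i)$ of $u$ and $(y_i)$ of $v$ and to normalise them so that $x_i,y_i\in S_X$ for \emph{every} $i$ (replace $x_i$ by $x_i/\|x_i\|$, and by a fixed unit vector where $x_i=0$, similarly for $y_i$); since $\lim_\U\|x_i\|=\lim_\U\|y_i\|=1$ this changes neither $u$ nor $v$. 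Then $\lim_\U\|x_i-y_i\|=\|u-v\|<\beta(\eps/2)$, so $A:=\{i\in I\colon\|x_i-y_i\|<\beta(\eps/2)\}\in\U$. For $i\in A$ use uniform micro-semitransitivity of $X$ to choose $T_i\in\mathcal{L}(X)$ with $\|T_i\|\leq1$, $T_ix_i=y_i$ and $\|T_i-\Id\|<\eps/2$, and put $T_i=\Id$ for $i\notin A$. The family $(T_i)$ is uniformly bounded, so $T:=[T_i]_\U\in\mathcal{L}([X]_\U)$ is well defined with $\|T\|=\lim_\U\|T_i\|\leq1$; since $A\in\U$ and $T_ix_i=y_i$ on $A$, one has $Tu=[T_ix_i]=v$, forcing $\|T\|=1$; and $\|T-\Id\|=\lim_\U\|T_i-\Id\|\leq\eps/2<\eps$. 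This is exactly the required conclusion.

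For micro-transitivity I would repeat the argument word for word, except that for $i\in A$ I take $T_i$ to be a surjective isometry with $T_ix_i=y_i$ and $\|T_i-\Id\|<\eps/2$ — available from Proposition~\ref{prop-micro-implies-uniform} — and again $T_i=\Id$ for $i\notin A$; since all the $T_i$ are surjective isometries, so is $T=[T_i]_\U$ (as recalled just before the statement), and the same estimates give $Tu=v$ and $\|T-\Id\|<\eps$, i.e.\ the $\beta$-formulation for $[X]_\U$, hence micro-transitivity by the converse noted above. I do not expect any genuine obstacle: the only points that need (routine) care are the passage to unit-vector representatives and the use of $\eps/2$, which is what turns the $\U$-almost-everywhere bound $\|T_i-\Id\|<\eps/2$ into a strict bound $\|T-\Id\|<\eps$ on the ultraproduct, where a priori only $\lim_\U\|T_i-\Id\|\leq\eps/2$ is available.
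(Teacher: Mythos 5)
Your proof is correct, and it is exactly the coordinatewise-selection-plus-ultraproduct-operator argument the paper has in mind when it calls the proof ``nearly trivial'' (normalise representatives to the sphere, choose $T_i$ on a set of the ultrafilter, take $T=[T_i]_\U$, and use $\|[T_i]_\U\|=\lim_\U\|T_i\|$ and the fact that ultraproducts of surjective isometries are surjective isometries). Your careful handling of the $\eps/2$ slack and the reduction of micro-transitivity to the $\beta$-formulation (which is also what Proposition~\ref{prop-micro-implies-uniform} and Lemma~\ref{lem:dense} provide) are the only points needing attention, and you treat them correctly.
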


For later use, it is worth to mention that the modulus of convexity of $X$ is the same as that of $[X]_\U$ \cite[p. 184]{KirkSims}. Also remark that
 $X$ is uniformly convex if and only if $[X]_\U$ is strictly convex. Indeed, if $X$ is not uniformly convex, there exist $\eps>0$ and two sequences $(x_i)$, $(y_i)$ in $S_X$ such that $\inf_{i\in \mathbb{N}}\|x_i-y_i\|\geq \eps$ and $\lim_i\left\| \frac{x_i+y_i}2\right\| =1$. This means that $[X]_\U$ is not strictly convex.

\subsection{Definition and some known results on the pointwise Bishop-Phelps-Bollob\'{a}s property} \label{subsec-BPBp}

Trying to extend to operators the improvement given by Bollob\'{a}s of the classical Bishop-Phelps theorem on the denseness of norm-attaining functionals, M.~D.~Acosta, R.~Aron, D.~Garc\'{\i}a, and M.~Maestre \cite{AAGM} introduced in 2008 the following property.

\begin{definition}[\textrm{\cite{AAGM}}]\label{def:BPBp}
A pair $(X,Y)$ of Banach spaces has the \emph{Bishop-Phelps-Bollob\'{a}s property} ({\it BPBp}, for short) if for every $\eps>0$, there exists $\eta(\eps)>0$ such that whenever $T\in \mathcal{L}(X,Y)$ with $\|T\|=1$ and $x_0\in S_X$ satisfy $\|Tx_0\|>1-\eta(\eps)$, there are $S\in \mathcal{L}(X,Y)$ with $\|S\| = 1$ and $x\in S_X$ such that $\|S\|=\|Sx\|=1$, $\|x_0-x\|<\eps$, and $\|S-T\|<\eps$.
\end{definition}
For  background on the BPBp, we refer the reader to the papers \cite{ABGM, AMS, ACKLM, CGKS, Cho-Choi,KL} and references therein. The next result summarizes the results of the BPBp that we will use in this paper.

\begin{proposition}[\textrm{\cite{AAGM,ABGM,KL}}]\label{prop:results-BPBp}
Let $X$ be a Banach space.
\begin{enumerate}
  \item[(a)] (Bishop-Phelps-Bollob\'{a}s theorem) The pair $(X,\K)$ has the BPBp.
  \item[(b)] If $X$ is uniformly convex, then the pair $(X,Y)$ has the BPBp for every Banach space $Y$. Moreover, the function $\eps\longmapsto \eta(\eps)$ in Definition \ref{def:BPBp} depends only on the modulus of convexity of $X$.
\end{enumerate}
\end{proposition}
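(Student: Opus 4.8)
The plan is to obtain both items from the literature on norm-attaining operators: (a) and (b) are, respectively, the classical Bishop--Phelps--Bollob\'{a}s theorem and a known sufficient condition (uniform convexity of the domain) for its operator version, so what I would do is recall the precise statements and indicate why the moduli behave as claimed, rather than reprove everything from scratch.

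For part (a) I would invoke the Bishop--Phelps--Bollob\'{a}s theorem as proved by Bollob\'{a}s and recorded in \cite{AAGM}: there is a modulus $\eta(\eps)$, of the order of $\eps^2$ and \emph{universal} over all Banach spaces, that works. If a self-contained proof were wanted, I would use Bollob\'{a}s' variational argument: given $x^*\in S_{X^*}$ and $x_0\in S_X$ with $\re x^*(x_0)>1-\eta$, order the slice $\{z\in B_X:\ z\succeq x_0\}$ by $z_1\preceq z_2 \iff \|z_1-z_2\|\le \tfrac2\eps\,\re\bigl(x^*(z_2)-x^*(z_1)\bigr)$, extract a $\preceq$-maximal element $y$ by Zorn's lemma, and separate to get a support functional at $y$ within $\eps$ of $x^*$; maximality forces $\|y-x_0\|=O(\eps)$. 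For present purposes citing \cite{AAGM} is enough.

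For part (b) the strategy is to reduce to part (a) and then upgrade. Given $T\in\mathcal L(X,Y)$ with $\|T\|=1$ and $x_0\in S_X$ with $\|Tx_0\|>1-\eta$, choose $y^*\in S_{Y^*}$ with $\re y^*(Tx_0)=\|Tx_0\|$ and set $x^*=T^*y^*\in B_{X^*}$; then $\re x^*(x_0)>1-\eta$, so $\|x^*\|>1-\eta$ as well. Applying (a) to $x^*/\|x^*\|$ gives $z\in S_X$ and $z^*\in S_{X^*}$ with $z^*(z)=1$, $\|z-x_0\|$ small and $\|z^*-x^*\|$ small. Setting $S=T+(z^*-x^*)\otimes u$ with $u=Tx_0/\|Tx_0\|$ makes $y^*\circ S=z^*$, whence $1\le\|Sz\|\le\|S\|\le 1+\|z^*-x^*\|$, so after renormalizing one gets an operator close to $T$ that \emph{almost} attains its norm at a point close to $x_0$. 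Genuine attainment is where uniform convexity of $X$ is indispensable, and this is the content of \cite{ABGM}: uniformly convex balls have uniformly small slices, $\mathrm{diam}\{x\in B_X:\ \re x^*(x)>1-\delta_X(\eps)\}\le\eps$, which supplies the rigidity needed to convert almost-attainment into honest attainment near $x_0$. Finally, the ``moreover'' assertion is \cite{KL}: running the whole argument with every estimate fed by the slice bound above makes the resulting $\eta(\eps)$ depend on $X$ only through $\delta_X$.

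The step I expect to be the genuine obstacle, were one to carry everything out in detail, is exactly this last upgrade from the functional statement to the operator statement. The operator norm is a supremum over $B_X$, which behaves badly under variational/Ekeland-type arguments (the map $x\mapsto\|Sx\|$ need not be weakly upper semicontinuous, so near-attainment is not automatically improvable), and uniform convexity of the domain is precisely the extra geometric input that forces honest norm attainment while keeping all constants tied to $\delta_X$. Since all of this is already established in \cite{ABGM,KL}, the proof here reduces to invoking those results.
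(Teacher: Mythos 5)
The paper gives no proof of this proposition: it is stated as a known result with exactly the references \cite{AAGM} for part (a) and \cite{ABGM,KL} for part (b), which is precisely how you resolve it, so your proposal takes the same approach as the paper. (Your heuristic sketch of how (b) might be proved is not the actual argument of \cite{KL}, which proceeds by an iterative construction of operators and points whose convergence is driven by uniform convexity, but since you explicitly defer to the cited results this does not affect correctness.)
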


If, in the definition of BPBp, the point where the new operator $S$ attains its norm is the same that the point where the operator $T$ almost attains its norm, we get the following stronger version of the Bishop-Phelps-Bollob\'{a}s property, introduced in \cite{DKL} and further developed in \cite{DaKaKiLeMa}.

\begin{definition}[\textrm{\cite{DKL}}]\label{def:pointwiseBPBp}
We say that a pair $(X, Y)$ of Banach spaces has the \emph{pointwise Bishop-Phelps-Bollob\'as property} (\emph{pointwise BPB property}, for short) if given $\eps > 0$, there exists $\tilde\eta(\eps) > 0$ such that whenever $T \in \mathcal{L}(X, Y)$ with $\|T\| = 1$ and $x_0 \in S_X$ satisfy
	\begin{equation*}
	\|T(x_0)\| > 1 - \tilde\eta(\eps),	
	\end{equation*}
there is $S \in \mathcal{L}(X, Y)$ with $\|S\| = 1$ such that
\begin{equation*}
\|S(x_0)\| = 1 \qquad \text{and} \qquad \|S - T\| < \eps.	
\end{equation*}
\end{definition}

The following are pairs of Banach spaces with the pointwise BPB property: $(H,Y)$ for a Hilbert space $H$ and every Banach space $Y$ \cite[Theorem 2.5]{DKL}, and $(X,C(K))$ for every uniformly smooth Banach space $X$ and every Hausdorff compact topological space $K$ \cite[Corollary 2.8]{DKL}. The pointwise BPB property is actually stronger than the BPBp, as the results from \cite{DaKaKiLeMa,DKL} show. We include here an omnibus result containing the main properties which we will use in this paper.

\begin{proposition}[\textrm{\cite{DaKaKiLeMa,DKL}}]
\label{omnibus}
Let $X$ be a Banach space.
\begin{enumerate}
  \item[(a)] If there is a nontrivial Banach space $Y$ such that the pair $(X,Y)$ has the pointwise BPB property, then $X$ is uniformly smooth.
  \item[(b)] If $(X,Y)$ has the pointwise BPB property for every Banach space $Y$, then $X$ is uniformly convex. Moreover, we have that
\[
\delta_{X}(\eps) \geq C\,\eps^q\qquad (0<\eps<2)
\]
for suitable $2\leq q<\infty$ and $C>0$.

  \item[(c)] If $X$ is isomorphic to a Hilbert space and $(X,Y)$ has the pointwise BPB property for every Banach space $Y$,  then $X$ has an optimal modulus of convexity, that is, there exists $C>0$ such that
$$
\delta_X(\eps)\geq C\,\eps^2 \qquad (0<\eps<2).
$$
Moreover, the constant $C$ depends only on the infimum over $Y$ of the collection of functions $\tilde{\eta}$ in Definition \ref{def:pointwiseBPBp} for all the pairs $(X,Y)$ (which is known to be always positive) and the Banach-Mazur distance from $X$ to the Hilbert space.
\end{enumerate}
\end{proposition}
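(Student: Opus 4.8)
Since Proposition~\ref{omnibus} only assembles facts established elsewhere — it collects results of \cite{DaKaKiLeMa} and \cite{DKL} — a proof amounts to pointing at those references. For the sake of a self-contained plan, here is the route I would take to reprove each item.

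For~(a) the aim is to show that $X^{*}$ is uniformly convex, equivalently that $X$ is uniformly smooth. By \v{S}mulyan's lemma it suffices to prove that for every $\eps>0$ there is $\delta>0$ such that $x\in S_X$, $f,g\in S_{X^{*}}$ with $f(x)>1-\delta$ and $g(x)>1-\delta$ force $\|f-g\|<\eps$. Fix a nontrivial $Y$, a vector $y_{0}\in S_Y$ and $y_0^{*}\in S_{Y^{*}}$ with $y_0^{*}(y_0)=1$. Given such an $f$, the rank-one operator $z\longmapsto f(z)y_{0}$ has norm $1$ and almost attains its norm at $x$, so the pointwise BPB property produces $S\in\mathcal{L}(X,Y)$ with $\|S\|=1$, $\|Sx\|=1$ and $\|S-f\otimes y_0\|$ small; composing $S$ with a norming functional of $Sx$ yields an element of $S_{X^{*}}$ which attains its norm at $x$ and lies close to $f$. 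Running this for $f$ and for $g$ reduces matters to a quantitative comparison of two norm-attaining functionals at the common point $x$; this last, genuinely delicate, step — which is where the structure of $Y$ is really used — is carried out in \cite{DKL}, and I would import it.

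For~(b) I would argue by contradiction. If $X$ is not uniformly convex, pick $\eps_{0}>0$ and $x_{n},y_{n}\in S_X$ with $\|x_{n}-y_{n}\|\geq\eps_{0}$ and $\|(x_{n}+y_{n})/2\|\to 1$, and let $u_{n}^{*}\in S_{X^{*}}$ norm $(x_n+y_n)/\|x_n+y_n\|$, so $u_n^{*}(x_n)\to 1$ and $u_n^{*}(y_n)\to 1$. Feeding (rank-one, or low-rank) test operators built from $u_n^{*}$ and the points $x_n,y_n$ into the pointwise BPB property — legitimate once $u_n^{*}(x_n)>1-\tilde\eta(\eps)$ — one obtains, for $n$ large, operators attaining exactly at $x_n$ that are so close to the originals that the separation $\|x_n-y_n\|\geq\eps_{0}$ becomes incompatible with $\|(x_n+y_n)/2\|\to 1$; this contradiction gives uniform convexity. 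The power-type bound $\delta_X(\eps)\geq C\eps^{q}$ is then obtained by running the above quantitatively, relating $\delta_X$ to the function $\tilde\eta$ of Definition~\ref{def:pointwiseBPBp}, and using that pointwise BPB passes to ultrapowers (Lemma~\ref{lemma:ultrapowers}) while the modulus of convexity does not improve under ultrapowers, so that a self-similarity argument forces $\delta_X$ to be of power type. For~(c) one feeds the extra hypothesis (an isomorphism of $X$ onto a Hilbert space) into the same machinery: a uniformly convex renorming of a Hilbert space may have arbitrarily poor, though still power-type, modulus of convexity, so the drop of the exponent to the optimal value $2$ must be extracted from the pointwise BPB property itself — one tracks the constant $\tilde\eta$ and converts a Hilbertian estimate into $\delta_X(\eps)\geq C\eps^{2}$ using the Banach-Mazur distance from $X$ to the Hilbert space.

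The step I expect to be the real obstacle is exactly the passage from ``$X$ is uniformly convex'' to ``$\delta_X$ is of power type'' in~(b), and thence to the optimal exponent $2$ in~(c): uniform convexity alone does not entail a power-type modulus, so one must genuinely use the uniformity of $\tilde\eta$ over all target spaces $Y$ together with a self-improvement (ultrapower/rescaling) mechanism. By contrast, item~(a) and the uniform-convexity half of~(b) are essentially bookkeeping with rank-one operators, norming functionals, and the triangle inequality.
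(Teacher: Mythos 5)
The paper gives no proof of Proposition~\ref{omnibus} at all: it is an omnibus statement quoted from \cite{DaKaKiLeMa,DKL}, so your decision to ``point at those references'' is precisely what the authors do, and in that sense your proposal matches the paper. The sketch you give for (a) (reduce to the pair $(X,\K)$ by composing with a norming functional of $Sx$, then invoke the equivalence with uniform smoothness from \cite{DKL}) is consistent with how the cited result is actually obtained.

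Two points in your supplementary sketch would fail, however, if taken as an actual proof. First, the uniform-convexity half of (b) is \emph{not} ``bookkeeping with rank-one operators'': a rank-one operator $u^*\otimes y_0$ factors through the scalars, and for the pair $(X,\K)$ the pointwise BPB property is equivalent to uniform smoothness (this is the very fact underlying (a)). A uniformly smooth but non-uniformly-convex norm (e.g.\ a smooth planar norm whose unit sphere contains a segment) satisfies the pointwise BPB conclusion for every rank-one operator into every $Y$: one simply replaces $u^*$ by a nearby functional attaining its norm at $x$ and tensors with the same $y_0$. Hence no contradiction with $\|x_n-y_n\|\geq\eps_0$ can be extracted from rank-one tests alone; the proofs in \cite{DaKaKiLeMa} genuinely exploit the freedom to choose the target space $Y$ (this is where ``for every Banach space $Y$'' enters), so the step you dismiss as easy is in fact the substantive one. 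Second, your route to the power-type estimate in (b) and the optimal exponent in (c) appeals to ``pointwise BPB passes to ultrapowers (Lemma~\ref{lemma:ultrapowers})'', but that lemma concerns micro-transitivity and uniform micro-semitransitivity, not the pointwise BPB property, so it cannot be cited for this purpose; in the cited papers the estimates $\delta_X(\eps)\geq C\eps^q$ and $\delta_X(\eps)\geq C\eps^2$ are obtained by direct quantitative arguments relating $\delta_X$ to the function $\tilde\eta$ of Definition~\ref{def:pointwiseBPBp} (and, for (c), to the Banach--Mazur distance to the Hilbert space), not by an ultrapower self-similarity mechanism. Since the statement is imported wholesale, these defects do not affect the paper, but they should be repaired or excised if your sketch is meant to stand as an argument.
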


\subsection{The relationship}\label{subsect:relationship}

As we commented before, it is shown in \cite[Theorem 2.5]{DKL} that a pair $(H,Y)$ where $H$ is a Hilbert space and $Y$ is arbitrary has the pointwise BPB property. To prove this result, it is used the fact that the norm of a Hilbert space is (uniformly) micro-transitive \cite[Lemma~2.2]{AMS}. Next, we prove that the uniform micro-semitransitivity of the norm allows to convert the BPBp to its pointwise version. This is, therefore, an abstract version of \cite[Theorem 2.5]{DKL}.

\begin{proposition}\label{prop2}
Let $X$ and $Y$ be Banach spaces. Suppose that the norm of $X$ is uniformly micro-semitransitive (in particular, if the norm is micro-transitive) and that the pair $(X, Y)$ has the BPBp. Then, the pair $(X, Y)$ has the pointwise BPB property.
\end{proposition}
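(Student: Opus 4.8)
The idea is to combine the BPBp of the pair $(X,Y)$ with uniform micro-semitransitivity of $X$ so as to move the "almost-norming" point of the operator to the true norming point by composing with a norm-one near-identity operator on $X$. Let $\eta(\cdot)$ be the function witnessing the BPBp for $(X,Y)$ and let $\beta(\cdot)$ be the function witnessing uniform micro-semitransitivity of $X$. Given $\eps>0$, I would set $\tilde\eta(\eps)$ equal to $\eta(\beta'(\eps))$ for a suitably small auxiliary parameter $\beta'(\eps)\le \min\{\beta(\eps/2),\eps/2\}$ — the exact bookkeeping constants can be adjusted at the end.

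Now suppose $T\in\mathcal{L}(X,Y)$, $\|T\|=1$, $x_0\in S_X$ with $\|T(x_0)\|>1-\tilde\eta(\eps)$. By the BPBp there are $\widetilde S\in\mathcal{L}(X,Y)$ with $\|\widetilde S\|=1$ and $x\in S_X$ with $\|\widetilde S(x)\|=1$, $\|x-x_0\|<\beta'(\eps)$ and $\|\widetilde S-T\|<\beta'(\eps)$. The operator $\widetilde S$ attains its norm at $x$, but we need an operator attaining its norm at $x_0$. Since $\|x_0-x\|<\beta'(\eps)\le\beta(\eps/2)$, uniform micro-semitransitivity provides $R\in\mathcal{L}(X)$ with $\|R\|=1$, $R(x_0)=x$ and $\|R-\Id\|<\eps/2$. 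Set $S=\widetilde S\circ R$. Then $\|S\|\le\|\widetilde S\|\,\|R\|=1$, while $\|S(x_0)\|=\|\widetilde S(R(x_0))\|=\|\widetilde S(x)\|=1$, so in fact $\|S\|=\|S(x_0)\|=1$ and $S$ attains its norm at $x_0$. Finally $\|S-T\|\le\|\widetilde S\circ R-\widetilde S\|+\|\widetilde S-T\|\le\|\widetilde S\|\,\|R-\Id\|+\beta'(\eps)<\eps/2+\eps/2=\eps$, using $\beta'(\eps)\le\eps/2$. This gives the pointwise BPB property with the stated $\tilde\eta$.

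The only subtlety — which is the step I would watch most carefully — is the order of composition: one must compose $\widetilde S$ with $R$ on the \emph{right} (so that $R$ moves $x_0$ to $x$, which is where $\widetilde S$ norms), rather than on the left; composing on the left would move the image rather than the domain point and would not fix the norming point at $x_0$. Everything else is a routine triangle-inequality estimate, and the constants can be tuned freely at the end. Since micro-transitivity implies uniform micro-semitransitivity (Proposition \ref{prop-micro-implies-uniform}), the parenthetical special case is automatic.
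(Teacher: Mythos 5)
Your argument is correct and is essentially the paper's own proof: apply the BPBp to get $\widetilde S$ norming at a nearby point, use uniform micro-semitransitivity to find a norm-one $R$ with $R(x_0)$ equal to that point and $\|R-\Id\|<\eps/2$, and compose on the right, $S=\widetilde S\circ R$, with the same triangle-inequality estimate. Your explicit auxiliary parameter $\beta'(\eps)\le\min\{\beta(\eps/2),\eps/2\}$ just makes transparent the harmless normalization $\beta(\eps)\le\eps$ that the paper uses implicitly.
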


\begin{proof}
Suppose that $(X, Y)$ has the BPBp with a function $\eps\longmapsto\bar\eta(\eps)$ and that the norm of $X$ is uniformly micro-semitransitive with the function $\eps\longmapsto \beta(\eps)$. Fix $\eps>0$ and suppose that $T \in \mathcal{L}(X, Y)$ with $\|T\| = 1$ and $x_0 \in S_X$ satisfy
\begin{equation*}
\|T(x_0)\| > 1 - \bar\eta\bigl(\beta(\eps/2)\bigr).
\end{equation*}
Then, by the BPBp of $(X,Y)$, there are  $\widetilde{S} \in \mathcal{L}(X, Y)$ with $\|\widetilde{S}\| = 1$ and $\widetilde{x}_0 \in S_X$ satisfying that
\begin{equation*}
\|\widetilde{S}(\widetilde{x}_0)\| = 1, \quad \|\widetilde{S} - T\| < \beta(\eps/2)\leq \frac{\eps}{2}, \quad \text{and} \quad \|x_0 - \widetilde{x}_0\| < \beta(\eps/2).
\end{equation*}
By the uniform micro-semitransitivity of the norm of $X$, there exists $R\in \mathcal{L}(X)$ with $\|R\| = 1$ such that
\begin{equation*}
R(x_0) = \widetilde{x}_0 \qquad \text{and} \qquad \|R - \Id\| < \frac{\eps}{2}.
\end{equation*}
Define $S   \coloneqq \widetilde{S} \circ R\in \mathcal{L}(X,Y)$. Then $\|S\| \leq 1$ and
\begin{equation*}
\|S(x_0)\| = \| \widetilde{S}(R(x_0))\| = \| \widetilde{S} (\widetilde{x}_0)\| = 1.
\end{equation*}
So, $\|S\| = \|S(x_0)\| = 1$. Moreover,
\begin{equation*}
\|S - T\| \leq \| \widetilde{S} \circ R - \widetilde{S}\| + \| \widetilde{S} - T\| \leq \| R - \Id\| + \frac{\eps}{2} < \eps.
\end{equation*}
This proves that the pair $(X, Y)$ has the pointwise BPB property as desired.
\end{proof}

As a consequence, we get again Theorem 2.5 of \cite{DKL} saying that every pair $(H,Y)$ where $H$ is a Hilbert space and $Y$ is a Banach space has the pointwise BPB property. But focusing on the uniform micro-semitransitivity, we get the following consequence.

\begin{corollary}\label{corollary-quasi-transitive=>unif-smooth}
Every Banach space whose norm is uniformly micro-semitransitive (in particular, if it is micro-transitive) is both uniformly smooth and uniformly convex.
\end{corollary}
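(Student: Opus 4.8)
The plan is to treat the two conclusions separately and, in each case, to route everything through the omnibus results on the pointwise BPB property (Propositions~\ref{prop:results-BPBp}, \ref{omnibus}) together with Proposition~\ref{prop2}.

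\emph{Uniform smoothness (the easy half).} This is essentially already in hand. By the Bishop--Phelps--Bollob\'as theorem, Proposition~\ref{prop:results-BPBp}(a), the pair $(X,\K)$ has the BPBp; since the norm of $X$ is uniformly micro-semitransitive, Proposition~\ref{prop2} upgrades this to the statement that $(X,\K)$ has the pointwise BPB property; and then Proposition~\ref{omnibus}(a), applied with the nontrivial range $Y=\K$, gives that $X$ is uniformly smooth. Along the way one records two corollaries for later use: $X$ is then reflexive, and, by the duality recalled in the introduction, $X^*$ is uniformly convex.

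\emph{Uniform convexity (the substantial half).} The target here is Proposition~\ref{omnibus}(b): it is enough to show that $(X,Y)$ has the pointwise BPB property for \emph{every} Banach space $Y$, and that same proposition then also hands us a modulus of convexity of power type, $\delta_X(\eps)\geq C\eps^q$ for some $2\leq q<\infty$ (and Proposition~\ref{omnibus}(c) even gives the optimal quadratic estimate if $X$ happens to be linearly isomorphic to a Hilbert space). By Proposition~\ref{prop2}, applied once more with the uniform micro-semitransitivity of $X$, it suffices to know that $(X,Y)$ has the plain BPBp for every $Y$.

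The one genuinely nontrivial point is this last reduction, and this is where I expect the main obstacle to lie. The only tool we have for ``$(X,Y)$ has the BPBp for every $Y$'' is Proposition~\ref{prop:results-BPBp}(b), which requires $X$ to be uniformly convex --- precisely what we want to conclude --- so one must break the circularity. The cleanest way I see is to pass to the dual: establish that uniform micro-semitransitivity is a self-dual property, i.e.\ that $X^*$ is again uniformly micro-semitransitive; granting this, the uniform-smoothness half, now applied to $X^*$, shows that $X^*$ is uniformly smooth, hence (duality again, using $X=X^{**}$) that $X$ is uniformly convex, and we may then feed ``$X$ uniformly convex'' back into Proposition~\ref{prop:results-BPBp}(b) to obtain the full ``BPBp for every $Y$'' needed above. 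The weight of the proof therefore falls entirely on the self-duality step, which is delicate: from a contractive automorphism of $X$ carrying one unit vector close to another one must produce a contractive automorphism of $X^*$ doing the analogous thing for a pair of nearby functionals, and the obvious candidates $T\mapsto T^*$ do not directly deliver the right pair because $X$ is not yet known to be strictly convex. An alternative, more hands-on route that bypasses the dual would be to upgrade ``$(X,\K)$ has the pointwise BPB property'' directly to ``$(X,Y)$ has the pointwise BPB property for every $Y$'': given $T\in\mathcal{L}(X,Y)$ with $\|T\|=1$ and $\|Tx_0\|$ close to $1$, choose $y^*\in S_{Y^*}$ norming $Tx_0$, apply the pointwise BPB property of $(X,\K)$ to a normalization of $y^*\circ T$ to get $\phi\in S_{X^*}$ attaining at $x_0$ with $\phi$ close to $y^*\circ T$, and correct $T$ by a small rank-one operator in the direction $Tx_0/\|Tx_0\|$; here the sticking point is to arrange $\|S\|=1$ rather than merely $\|S\|\leq 1+\eps$ while keeping $\|Sx_0\|=1$, which forces one to spend the freedom of the uniform micro-semitransitivity of $X$ by composing with a near-identity contraction moving $x_0$, exactly in the spirit of the proof of Proposition~\ref{prop2}. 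Either way, once ``$(X,Y)$ has the BPBp for every $Y$'' is secured, Propositions~\ref{prop2} and \ref{omnibus}(b) finish the proof.
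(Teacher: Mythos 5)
Your first half is fine and coincides with the paper: Proposition~\ref{prop:results-BPBp}(a) plus Proposition~\ref{prop2} give the pointwise BPB property for $(X,\K)$, and Proposition~\ref{omnibus}(a) yields uniform smoothness (hence reflexivity). The uniform convexity half, however, has a genuine gap: you correctly locate the circularity (Proposition~\ref{prop:results-BPBp}(b) needs the very uniform convexity you are after), but neither of your two escape routes is actually carried out. Route one, self-duality of uniform micro-semitransitivity, is not a smaller lemma than the corollary: in the paper it is Proposition~\ref{prop-appendix-duality}, and its proof \emph{uses} Corollary~\ref{corollary-quasi-transitive=>unif-smooth} -- uniform convexity of $X$ to turn $\|x^*-y^*\|<\delta_X(\beta(\eps))$ into closeness of norming points, reflexivity to have norming points at all, and smoothness to conclude $S^*y^*=x^*$ -- precisely the ingredients you do not yet possess; you name the obstruction (``$T\mapsto T^*$ does not deliver the right pair'') but do not overcome it. Route two cannot work as a soft argument either: $(X,\K)$ has the pointwise BPB property exactly when $X$ is uniformly smooth, whereas ``pointwise BPB for every $Y$'' forces uniform convexity by Proposition~\ref{omnibus}(b), so any upgrade must use micro-semitransitivity in an essential quantitative way; your sketch leaves unresolved exactly the hard point (getting $\|S\|=\|Sx_0\|=1$ rather than $\|S\|\leq 1+\eps$ after the rank-one correction), and composing with a near-identity contraction moving $x_0$ does not visibly repair it.

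The paper sidesteps all of this with a short qualitative argument via ultrapowers, which is the ingredient your plan is missing. Uniform convexity of $X$ is equivalent to strict convexity of an ultrapower $[X]_\U$; by Lemma~\ref{lemma:ultrapowers} the ultrapower is again uniformly micro-semitransitive, hence uniformly smooth by the half you already proved, hence reflexive, so its unit ball has extreme points; and by Remark~\ref{remark-exists-isomorphism} any two points of the sphere of a uniformly micro-semitransitive space are linked by a norm-one isomorphism, which cannot carry a non-extreme point to an extreme point, so the existence of a single extreme point forces strict convexity of $[X]_\U$ and therefore uniform convexity of $X$. Once uniform convexity is secured this way, your intended chain Proposition~\ref{prop:results-BPBp}(b) $\Rightarrow$ Proposition~\ref{prop2} $\Rightarrow$ Proposition~\ref{omnibus}(b) does go through, but that is the content of the later Corollary~\ref{cor-charact-BPBp-BPBpp}, not a proof of the present statement.
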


\begin{proof}
Assume $X$ is uniformly micro-semitransitive.
By the Bishop-Phelps-Bollob\'as theorem (see Proposition \ref{prop:results-BPBp}.a), the pair $(X, \mathbb{K})$ has the BPBp. If the norm of $X$ is uniformly micro-semitransitive, then $(X, \mathbb{K})$ has the pointwise BPB property by Proposition \ref{prop2}. Hence, $X$ is uniformly smooth by Proposition \ref{omnibus}.a.

To prove that $X$ is uniformly convex it clearly suffices to show that the ultrapowers of $X$ are strictly convex. Note that a  uniformly micro-semitransitive space is strictly convex if and only if its unit sphere has an extreme point: this follows from Remark~\ref{remark-exists-isomorphism}, since a norm-one isomorphism cannot send a non-extreme point of the unit sphere into an extreme point of the unit sphere.

Now, if $X_\U$ is any ultrapower of $X$, then we know from Lemma \ref{lemma:ultrapowers} that it is uniformly micro-semitransitive, hence uniformly smooth for the first part of the proof, hence reflexive, and thus the unit ball of $X_\U$ has (many!) extreme points. By the preceding remark, $X_\U$ is strictly convex and so $X$ is uniformly convex.
\end{proof}

Another consequence of Proposition \ref{prop2} and the results presented in subsection \ref{subsec-BPBp} is the following.

\begin{corollary}\label{cor-charact-BPBp-BPBpp}
Let $X$ be a Banach space whose norm is uniformly micro-semitransitive. Then:
\begin{enumerate}
  \item[(a)] $(X,Y)$ has the BPBp for every Banach space $Y$;
  \item[(b)] Actually, $(X,Y)$ has the pointwise BPB property for every Banach space $Y$;
  \item[(c)] There exist $2\leq q<\infty$ and $C>0$ such that $\delta_{X}(\eps) \geq C\,\eps^q$ for every $0<\eps<2$.
\end{enumerate}
If, moreover, $X$ is isomorphic to a Hilbert space, then there exists $C>0$ such that
$$
\delta_X(\eps)\geq C\,\eps^2 \qquad (0<\eps<2),
$$
where the constant $C$ depends only on the modulus of convexity of $X$, on the function $\beta(\cdot)$ from the definition of uniform micro-transitivity, and on the Banach-Mazur distance from $X$ to the Hilbert space.
\end{corollary}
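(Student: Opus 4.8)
The plan is to assemble the statement by chaining together the machinery already developed, and then to keep careful track of the constant dependencies for the last assertion. Nothing new needs to be proved; the work is entirely organizational.

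First I would invoke Corollary~\ref{corollary-quasi-transitive=>unif-smooth} to obtain that $X$ is uniformly convex (the uniform smoothness it also provides will not be needed below). Part (a) is then immediate from Proposition~\ref{prop:results-BPBp}(b): uniform convexity of $X$ gives that $(X,Y)$ has the BPBp for every Banach space $Y$, and crucially the modulus $\eps\longmapsto\bar\eta(\eps)$ witnessing this depends only on the modulus of convexity $\delta_X$, not on $Y$. Part (b) follows by feeding (a) into Proposition~\ref{prop2}: since the norm of $X$ is uniformly micro-semitransitive and $(X,Y)$ has the BPBp, the pair $(X,Y)$ has the pointwise BPB property, and this holds for every $Y$. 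Part (c) is then a direct application of Proposition~\ref{omnibus}(b) to $X$: since $(X,Y)$ has the pointwise BPB property for every $Y$, there are $2\leq q<\infty$ and $C>0$ with $\delta_X(\eps)\geq C\,\eps^q$ on $(0,2)$.

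For the final assertion, assuming in addition that $X$ is isomorphic to a Hilbert space, I would apply Proposition~\ref{omnibus}(c): it yields $\delta_X(\eps)\geq C\,\eps^2$ with $C$ depending only on the infimum over $Y$ of the collection of functions $\tilde\eta$ from Definition~\ref{def:pointwiseBPBp} for the pairs $(X,Y)$, together with the Banach--Mazur distance from $X$ to the Hilbert space. The one point needing a line of verification is that this infimum is itself controlled by $\delta_X$ and $\beta(\cdot)$. Inspecting the proof of Proposition~\ref{prop2}, the pointwise modulus it produces is $\tilde\eta(\eps)=\bar\eta\bigl(\beta(\eps/2)\bigr)$, where $\bar\eta$ is the BPBp modulus of $(X,Y)$; by Proposition~\ref{prop:results-BPBp}(b) this $\bar\eta$ depends only on $\delta_X$, so $\tilde\eta$ depends only on $\delta_X$ and $\beta$, and in particular the infimum over $Y$ does too. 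Substituting this into the dependence clause of Proposition~\ref{omnibus}(c) gives exactly the advertised dependence of $C$ on $\delta_X$, on $\beta(\cdot)$, and on the Banach--Mazur distance.

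There is no genuinely hard step here: the difficulty, such as it is, lies only in invoking the preceding results in the correct order and, for the quantitative final statement, in threading the constants correctly through Proposition~\ref{prop2} (which converts $\bar\eta$ into $\tilde\eta$ by precomposition with $\beta$) so that the output constant $C$ ends up depending solely on the three pieces of data listed in the statement.
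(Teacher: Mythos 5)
Your proposal is correct and follows essentially the same route as the paper: uniform convexity from Corollary~\ref{corollary-quasi-transitive=>unif-smooth}, then Proposition~\ref{prop:results-BPBp}(b), Proposition~\ref{prop2}, and Proposition~\ref{omnibus} in the same order, with the same tracking of how $\tilde\eta(\eps)=\bar\eta\bigl(\beta(\eps/2)\bigr)$ makes the constant depend only on $\delta_X$, $\beta(\cdot)$, and the Banach--Mazur distance. (You also correctly attribute the ``moreover'' part to Proposition~\ref{omnibus}(c), which is what the paper's citation of a nonexistent part (d) must mean.)
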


\begin{proof}
As $X$ is uniformly convex by Corollary \ref{corollary-quasi-transitive=>unif-smooth}, (a) follows from Proposition \ref{prop:results-BPBp}.b. Assertion (b) follows from (a) by using Proposition \ref{prop2}. Finally, (c) follows from (b) by using Proposition \ref{omnibus}.b.

The ``moreover'' part follows from Proposition \ref{omnibus}.d. Indeed, suppose that $X$ is uniformly convex and let $Y$ be an arbitrary Banach space. By Proposition \ref{prop:results-BPBp}.b, the function $\eps\longmapsto \eta(\eps)$ from the definition of the BPBp for the pair $(X,Y)$ only depends on the modulus of convexity of $X$; then, by the proof of Proposition \ref{prop2}, the function $\eps\longmapsto\tilde{\eta}(\eps)$ from the definition of the pointwise BPB property for $(X,Y)$ only depends on the modulus of convexity of $X$ and the function giving the uniform micro-semitransitivity of $X$. With this in mind, the result follows immediately from Proposition \ref{omnibus}.d.
\end{proof}

\section{Further results}\label{sec-main}
Our first aim here is to prove the following result.

\begin{theorem}\label{theorem:noLp-microtransitive}
The norm of $L_p(\mu)$ is uniformly micro-semitransitive (or micro-transitive) if and only if it is a Hilbertian norm. That is, $p=2$ or the space is one-dimensional.
\end{theorem}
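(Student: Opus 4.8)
The ``if'' direction is immediate: a Hilbertian norm---which, in the case of $L_p(\mu)$, means either $p=2$ or the one-dimensional case, the scalar field $\K$ being $L_2$ of a one-point space---is micro-transitive (with $\beta(\eps)=\eps$, as recalled above) and hence uniformly micro-semitransitive. By Proposition~\ref{prop-micro-implies-uniform}, the ``only if'' direction reduces to the uniformly micro-semitransitive case, so the plan is: assuming the norm of $L_p(\mu)$ is uniformly micro-semitransitive and $\dim L_p(\mu)\geq 2$, force $p=2$.

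The first move is a reduction to the two-dimensional space $\ell_p^2$. If $\dim L_p(\mu)\geq 2$, there exist two disjoint measurable sets $A_1,A_2$ with $0<\mu(A_i)<\infty$, and the averaging operator $f\longmapsto \sum_{i=1,2}\mu(A_i)^{-1}\bigl(\int_{A_i}f\,d\mu\bigr)\chi_{A_i}$ is a norm-one projection (contractivity being Jensen's inequality on each $A_i$) onto $\spann\{\chi_{A_1},\chi_{A_2}\}$, which is isometric to $\ell_p^2$ after rescaling the coordinates by $\mu(A_i)^{1/p}$. By Remark~\ref{Remark-one-complemented}, $\ell_p^2$ is then uniformly micro-semitransitive. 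This reduction is essential rather than cosmetic: an infinite-dimensional $L_p(\mu)$ with $p\neq 2$ is not isomorphic to a Hilbert space, so the sharp modulus estimate below cannot be applied to it directly, whereas it does apply to $\ell_p^2$.

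Next I would pin down the exponent. By Corollary~\ref{corollary-quasi-transitive=>unif-smooth}, $\ell_p^2$ is uniformly smooth and uniformly convex, hence smooth and strictly convex, which already excludes $p\in\{1,\infty\}$. For $1<p<2$ I would pass to the dual: using the fact---established beforehand, as announced in the introduction---that uniform micro-semitransitivity passes to dual spaces, the space $(\ell_p^2)^{*}=\ell_q^2$ with $q=p/(p-1)>2$ is again uniformly micro-semitransitive, so after replacing $\ell_p^2$ by $\ell_q^2$ if necessary one may assume $2\leq p<\infty$.

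Finally, one derives a contradiction when $p>2$. Being two-dimensional, $\ell_p^2$ is isomorphic to a Hilbert space, so the ``moreover'' part of Corollary~\ref{cor-charact-BPBp-BPBpp} furnishes $C>0$ with $\delta_{\ell_p^2}(\eps)\geq C\eps^2$ for all $0<\eps<2$. On the other hand, an elementary computation of the modulus of convexity of $\ell_p^2$ for $p>2$---say, evaluating $1-\bigl\|\tfrac{x+y_t}{2}\bigr\|_p$ on the unit vectors $x=(1,0)$ and $y_t=\bigl((1-t^p)^{1/p},t\bigr)$ and letting $t\to 0^{+}$, where $\|x-y_t\|_p=t+o(t)$ while $1-\bigl\|\tfrac{x+y_t}{2}\bigr\|_p=\tfrac1p\bigl(\tfrac12-\tfrac1{2^p}\bigr)t^p+o(t^p)$---shows that $\delta_{\ell_p^2}(\eps)=O(\eps^p)$ as $\eps\to 0^{+}$. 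Since $p>2$, these two facts are incompatible; hence $p=2$ and $L_p(\mu)$ is a Hilbert space. The real obstacle here is conceptual rather than computational: the power-type bound of Corollary~\ref{cor-charact-BPBp-BPBpp}(c) is useless, as \emph{every} $L_p$ satisfies it, and the only traction comes from the \emph{optimal} $\eps^2$ estimate valid for spaces isomorphic to Hilbert spaces---which is precisely why one must descend to the finite-dimensional $\ell_p^2$---together with the auxiliary self-duality of uniform micro-semitransitivity needed to handle $1<p<2$; the remaining ingredient, the asymptotics of $\delta_{\ell_p^2}$, is routine if slightly delicate.
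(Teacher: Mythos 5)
Your argument is correct, but it takes a genuinely different route to the crucial step than the paper does. Both proofs share the same skeleton: micro-transitivity reduces to uniform micro-semitransitivity via Proposition~\ref{prop-micro-implies-uniform}, uniform smoothness (Corollary~\ref{corollary-quasi-transitive=>unif-smooth}) rules out $p\in\{1,\infty\}$, and the heart of the matter is pushed down to $\ell_p^{(2)}$, which is $1$-complemented in any at least two-dimensional $L_p(\mu)$, so that Remark~\ref{Remark-one-complemented} applies (you spell out the averaging projection; the paper leaves this standard fact implicit). Where you diverge is in showing that $\ell_p^{(2)}$, $p\neq 2$, is not uniformly micro-semitransitive: the paper's Example~\ref{example-ellp2-direct} is a bare-hands computation --- a second-order expansion of $t\longmapsto\|T(e_1+te_2)\|^p$ shows that any norm-one operator sending $e_1$ to $2^{-1/p}(1,1)$ must annihilate $e_2$, contradicting the ``transitivity by norm-one isomorphisms'' of Remark~\ref{remark-exists-isomorphism}, and the case $1<p<2$ is handled by passing to the adjoint of that single operator. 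You instead invoke the heavier machinery: the optimal modulus estimate $\delta_X(\eps)\geq C\eps^2$ from the ``moreover'' part of Corollary~\ref{cor-charact-BPBp-BPBpp} (applicable to the two-dimensional, hence Hilbert-isomorphic, space $\ell_p^{(2)}$), the self-duality of uniform micro-semitransitivity (Proposition~\ref{prop-appendix-duality}) to reduce $1<p<2$ to $p>2$, and the classical asymptotics $\delta_{\ell_p^{(2)}}(\eps)\asymp\eps^p$, which you compute correctly. This is in fact exactly the alternative the authors sketch in one sentence after Proposition~\ref{prop-appendix-duality}, so it is a legitimate route; note only that in the paper's ordering Proposition~\ref{prop-appendix-duality} appears \emph{after} the theorem, but since its proof uses nothing beyond Corollary~\ref{corollary-quasi-transitive=>unif-smooth} there is no circularity --- your presentation just requires stating the duality result first. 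The trade-off: the paper's argument is elementary and self-contained, needing only Remark~\ref{remark-exists-isomorphism}; yours is conceptually cleaner and explains \emph{why} the obstruction is quantitative (the forced $\eps^2$ modulus versus the genuine $\eps^p$ modulus), at the cost of routing through the pointwise BPB machinery and the duality proposition.
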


Note that the above result shows that transitivity of a norm does not imply micro-transitivity and so, Effros' theorem can not be extended to the group of isometries of arbitrary Banach spaces. For possible extensions of Effros' result to some non-separable settings, we refer the reader to \cite{Ostaszewski}.

\begin{example}
{\slshape There are (non-separable) Banach spaces whose norm is transitive but not micro-transitive}.\ Indeed, it is known that for $1\leq p <\infty$, there are (non-separable) $L_p(\mu)$ spaces whose standard norms are transitive \cite[Proposition 9.6.7]{Rolewicz}, but they are not micro-transitive by Theorem \ref{theorem:noLp-microtransitive} except for $p= 2$.
\end{example}

To provide the proof of Theorem \ref{theorem:noLp-microtransitive} we need to state the following particular case.

\begin{example}\label{example-ellp2-direct}
For $1<p<\infty$ and $p\neq 2$, the norm of  $X=\ell_p^{(2)}$ is not uniformly micro-semitransitive.
\end{example}

\begin{proof}
As usual, we denote $e_1 = (1, 0)$,  $e_2 = (0, 1)$. For $a_p = 2^{- 1/p}$, we consider $x_p = a_p (1, 1) \in S_X$.  Suppose first that $p>2$. Let us demonstrate that there is no isomorphism $T\in \mathcal{L}(X)$ with $\|T\|=1$ such that $T(e_1) = x_p$. This would give the result by using Remark \ref{remark-exists-isomorphism}.
Indeed, we fix an operator $T\in \mathcal{L}(X)$ with $\|T\|=1$ such that $T(e_1) = x_p$ and we write $ Te_2 = (u_1, u_2)$.  Then, for all $t \in (-1, 1)$ we have that
$$
\|T(e_1 + t e_2)\|^p \leq 1 + |t|^p,
$$
that is,
$$
(a_p + t u_1)^p + (a_p + t u_2)^p  \leq 1 + |t|^p
$$
and
$$
\bigl(1 + t\, u_1/a_p\bigr)^p + \bigl(1 + t\, u_2/a_p\bigr)^p  \leq 2 + 2 |t|^p.
$$
For small values of $\tau$, we have asymptotically that $(1 + \tau)^p = 1 + p \tau + \frac{p(p-1)}{2} \tau^2 + o(\tau^2)$, so
$$
pt \bigl(u_1/a_p + u_2/a_p\bigr)  +  \frac{p(p-1)}{2}t^2 \bigl((u_2/a_p)^2 + (u_1/a_p)^2\bigr)  \leq  2 |t|^p + o(t^2).
$$
Dividing by $t^2$, we get that
$$
\limsup_{t \to 0} \frac{p\bigl(u_1/a_p + u_2/a_p\bigr)}{t} + \frac{p(p-1)}{2}\bigl((u_2/a_p)^2 + (u_1/a_p)^2\bigr) \leq 0.
$$
Taking into account that $t$ in the above $\limsup$ takes both negative and positive values, this implies that $u_1 = u_2 = 0$, that is, $Te_2 = 0$ and so $T$ is not an isomorphism.

On the other hand, suppose that $1<p<2$ and let $q$ be the number satisfying $1/p+1/q=1$. Then, we show that  there is no isomorphism $T\in \mathcal{L}(X)$ with $\|T\|=1$ such that $T(x_p) = e_1$. Indeed, we fix an operator $T\in \mathcal{L}(X)$ with $\|T\|=1$ such that $T(x_p) = e_1$. Then $T^*(e_1)=x_q$. The previous argument shows that $T^*e_2 =0$. So $T$ is not an isomorphism.
\end{proof}

Now we are ready to prove Theorem~\ref{theorem:noLp-microtransitive}.

\begin{proof}[Proof of Theorem~\ref{theorem:noLp-microtransitive}]
As we mentioned earlier, it is shown in \cite{AMS} that the Hilbertian norm is micro-transitive. Suppose that the norm of $L_p(\mu)$ is uniformly micro-semitransitive. Then it is uniformly smooth by Proposition~\ref{prop2}. So it is one-dimensional or $1<p<\infty$.
We may then assume that $1<p<\infty$ and that the dimension of $L_p(\mu)$ is greater than one. Then the result follows from Remark~\ref{Remark-one-complemented} and Example~\ref{example-ellp2-direct}.
\end{proof}

Our next result shows that our micro-transitivity properties transfer from the norm of a Banach space to the norm of its dual space.

\begin{proposition}\label{prop-appendix-duality}
A Banach space is uniformly micro-semitransitive (respectively, micro-transitive) if and only if its dual is.
\end{proposition}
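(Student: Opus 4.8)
The plan is to pass everything through the normalized duality mapping, taking advantage of the fact that uniformly micro-semitransitive spaces are reflexive, uniformly convex and uniformly smooth (Corollary~\ref{corollary-quasi-transitive=>unif-smooth}), so that the geometries of $X$ and $X^*$ are tightly coupled. By duality it suffices to prove one implication: I would prove that if $X$ is uniformly micro-semitransitive (respectively micro-transitive) then so is $X^*$, and then recover the converse by applying this to $X^*$ and using reflexivity to identify $X^{**}=X$.

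So assume $X$ is uniformly micro-semitransitive, with witnessing function $\beta$. First I would invoke Corollary~\ref{corollary-quasi-transitive=>unif-smooth} to get that $X$ is uniformly convex and uniformly smooth, hence reflexive; identify $X^{**}=X$. Let $J_X\colon S_X\to S_{X^*}$ be the (normalized) duality mapping, single-valued because $X$ is smooth, and let $J_{X^*}\colon S_{X^*}\to S_X$ be the duality mapping of $X^*$; these are mutually inverse bijections, and $J_{X^*}$ is norm-to-norm uniformly continuous because $X^*$ is uniformly smooth (as $X$ is uniformly convex); see \cite{Diestel,FHHMPZ}. The engine of the proof is the elementary remark that \emph{if $T\in\mathcal{L}(X)$ with $\|T\|\leq 1$ satisfies $Tv=u$ for some $u,v\in S_X$, then $T^*(J_Xu)=J_Xv$}: indeed $\bigl(T^*(J_Xu)\bigr)(v)=(J_Xu)(Tv)=(J_Xu)(u)=1$ while $\|T^*(J_Xu)\|\leq\|T\|\leq 1$, so $T^*(J_Xu)$ supports the unit ball at $v$ and smoothness of $X$ forces it to equal $J_Xv$.

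Next, given $\eps\in(0,2)$, I would use the uniform continuity of $J_{X^*}$ to pick $\beta^*(\eps)>0$ with the property that $\|J_{X^*}x^*-J_{X^*}y^*\|<\beta(\eps)$ whenever $x^*,y^*\in S_{X^*}$ satisfy $\|x^*-y^*\|<\beta^*(\eps)$. For such $x^*,y^*$, put $x=J_{X^*}x^*$ and $y=J_{X^*}y^*$, so that $J_Xx=x^*$, $J_Xy=y^*$, and $\|y-x\|<\beta(\eps)$; applying uniform micro-semitransitivity of $X$ to the pair $(y,x)$ yields $T\in\mathcal{L}(X)$ with $\|T\|=1$, $Ty=x$ and $\|T-\Id_X\|<\eps$. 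Then $S\coloneqq T^*\in\mathcal{L}(X^*)$ has $\|S\|=1$, $\|S-\Id_{X^*}\|=\|T-\Id_X\|<\eps$, and, by the remark above with $u=x$ and $v=y$, $Sx^*=T^*(J_Xx)=J_Xy=y^*$. Hence $X^*$ is uniformly micro-semitransitive with witnessing function $\beta^*$. For the micro-transitive version, $X$ micro-transitive is uniformly micro-semitransitive by Proposition~\ref{prop-micro-implies-uniform}, so the same computation goes through and this time $T$, hence $S=T^*$, may be taken to be a surjective isometry; thus $X^*$ satisfies the uniform form of micro-transitivity, which in turn is readily seen to imply micro-transitivity of $X^*$ (a small enough norm-neighborhood of $\Id_{X^*}$ sits inside any prescribed strong-operator neighborhood, so the orbits are neighborhoods).

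The step I expect to require the most care is keeping the direction of the duality mapping straight in the key remark: it turns ``$Ty=x$'' into ``$T^*x^*=y^*$'', which is exactly why the hypothesis on $X$ must be invoked for the pair $(y,x)$ rather than $(x,y)$; the only other thing to double-check is that one indeed has $J_Xx=x^*$ and $J_Xy=y^*$, which follows from smoothness of $X$ together with the defining property of $J_{X^*}$. Everything else --- adjoints being isometric, $(T-\Id_X)^*=T^*-\Id_{X^*}$, and the reflexivity identifications --- is routine.
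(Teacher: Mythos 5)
Your proposal is correct and follows essentially the same route as the paper: reduce to one implication via reflexivity, transport a close pair $x^*,y^*\in S_{X^*}$ to their (unique) supporting points in $S_X$, apply the hypothesis there, and pass to the adjoint, identifying $T^*x^*$ with $y^*$ by smoothness of $X$ exactly as in the paper's computation $[S^*y^*](x)=y^*(Sx)=1$. The only cosmetic difference is that the paper makes the ``closeness of functionals implies closeness of support points'' step explicit and quantitative, taking $\|x^*-y^*\|<\delta_X(\beta(\eps))$ and estimating $\|\frac{x+y}{2}\|$, whereas you cite the uniform norm-to-norm continuity of the duality mapping of $X^*$, which is the same fact in packaged form.
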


\begin{proof} It suffices to check the ``only if'' part since by Corollary~\ref{corollary-quasi-transitive=>unif-smooth} uniformly micro-semitransitive spaces are reflexive.

If the norm of $X$ is uniformly micro-semitransitive, then $X$ is both uniformly smooth and uniformly convex, by Corollary \ref{corollary-quasi-transitive=>unif-smooth}. Hence, $X^*$ is  uniformly convex and uniformly smooth.
Let $\eps \longmapsto\beta(\eps)$ be a function witnessing the norm of $X$ to be uniformly micro-semitransitive. Fix $\eps\in (0,1)$. Given $x^*, y^*$ in $S_{X^*}$ with $$\|x^* - y^*\|<\delta_X(\beta(\eps)),$$ there exist  $x, y$ in $S_X$ satisfying that $x^*(x) =y^*(y)=1$.
Then, $$|x^*(x)- y^*(x)|<\delta_X(\beta(\eps)),\qquad \text{so}\ \ \re y^*(x) > 1-\delta_X(\beta(\eps))$$ and thus,
\[
\left\| \frac{x+y}{2}\right\|\geq \re y^*\left(\frac{x+y}{2}\right)> 1-\delta_X(\beta(\eps)).
\]
This means that $\|x-y\|<\beta(\eps)$. The uniform micro-semitransitivity of the norm of $X$ provides us with $S\in \mathcal{L}(X)$ satisfying that $\|S\|=1$ and
\[
Sx =  y, \qquad  \text{and} \qquad \|S-\Id\|<\eps.
\]
Then, we have $\|S^*\|= 1$ and $\|S^* - \Id\|<\eps$. As $[S^*y^*](x) = y^*(Sx)=y^*(y)=1$, we get that $S^*y^* =x^*$ by the (uniform) smoothness of $X$ recalling Corollary \ref{corollary-quasi-transitive=>unif-smooth}. This gives that the norm of $X^*$ is uniformly micro-semitransitive.

In the case of micro-transitive norms, if the operator $S$ in the preceding proof is a surjective isometry, then so is $S^*$,  giving thus the micro-transitivity of the norm of $X^*$.
\end{proof}

The proposition above shows that Corollary~\ref{cor-charact-BPBp-BPBpp} also holds for the dual space $X^*$ as well as $X$ if the space is micro-transitive. It is worth to note that using this fact, Example~\ref{example-ellp2-direct} can be proved by Corollary~\ref{cor-charact-BPBp-BPBpp}.e and Proposition~\ref{prop-appendix-duality} since the modulus of convexity of $\ell_p^{(2)}$ is equivalent to $\eps^p$ for $p>2$.

We do not know any examples of Banach spaces whose norm is micro-transitive other than Hilbert spaces, and it may be possible that this property actually implies the space to be Hilbertian. Even more, this can be also the case for Banach space with uniformly micro-semitransitive norm. Note that the ``moreover'' part of Corollary~\ref{cor-charact-BPBp-BPBpp} shows that finite dimensional 1-complemented subspaces of uniformly micro-semitransitive spaces have ``the right'' moduli of uniform convexity.
The next result also goes in this line.

\begin{proposition}
Let $X$ be a uniformly micro-semitransitive Banach space. Suppose further that there is a function $\beta\colon(0,2)\longrightarrow \R^+$ such that the norm of all two-dimensional subspaces of $X$ and all two-dimensional quotients of $X$ is uniformly micro-semitransitive witnessed by this function. Then,  there is a constant $C>0$ such that
\[
\delta_{X}(\eps) \geq C \eps^2\quad \text{and}  \quad \delta_{X^*}(\eps) \geq C \eps^2 \qquad (0<\eps<2).
\]
Therefore, $X$ has type $2$ and cotype $2$ and so, it is isomorphic to a Hilbert space.
\end{proposition}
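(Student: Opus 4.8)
The plan is to reduce both estimates to the two-dimensional case and then invoke the ``moreover'' part of Corollary~\ref{cor-charact-BPBp-BPBpp}, arranging matters so that the constant it produces can be taken the same for all the two-dimensional pieces involved. We may assume $\dim X\geq 2$, since otherwise $X=\K$ is Hilbertian. Write $\rho_Y(\tau):=\sup\bigl\{\tfrac12\bigl(\|y+\tau z\|+\|y-\tau z\|\bigr)-1 : y,z\in S_Y\bigr\}$ for the modulus of smoothness; recall the Lindenstrauss duality $\delta_{Y^*}(\eps)=\sup_{\tau>0}\bigl(\tfrac12\eps\tau-\rho_Y(\tau)\bigr)$.

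First I would collect the elementary facts I need. By Corollary~\ref{corollary-quasi-transitive=>unif-smooth} the space $X$ is reflexive. By the definition of the modulus of convexity, $\delta_X(\eps)=\inf_E\delta_E(\eps)$ and $\delta_{X^*}(\eps)=\inf_F\delta_F(\eps)$, where $E$ runs over the two-dimensional subspaces of $X$ and $F$ over those of $X^*$; in particular $\delta_E\geq\delta_X$ and $\delta_F\geq\delta_{X^*}$ pointwise. If $Z$ is a subspace of a Banach space $W$, then clearly $\rho_Z\leq\rho_W$, whence $\delta_{Z^*}\geq\delta_{W^*}$ by the duality formula above; applying this with $Z=M^\perp\subseteq X^*$ and using reflexivity (so that $(M^\perp)^*=X/M$) gives $\delta_{X/M}\geq\delta_X$ for every closed finite-codimensional subspace $M\subseteq X$. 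Also, every two-dimensional subspace $F$ of $X^*$ equals $(X/F^\perp)^*$, i.e.\ is the dual of the two-dimensional quotient $X/F^\perp$ of $X$. Finally, by John's theorem every two-dimensional normed space is isomorphic to a two-dimensional Hilbert space with Banach--Mazur distance bounded by an absolute constant ($\sqrt2$ in the real case).

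For the main argument I would fix a witness $\beta$ as in the hypothesis. Every two-dimensional subspace $E$ of $X$ is uniformly micro-semitransitive with this \emph{fixed} $\beta$, is isomorphic to a Hilbert space with Banach--Mazur distance $\leq\sqrt2$, and has $\delta_E\geq\delta_X$. Every two-dimensional subspace $F$ of $X^*$ is $(X/M)^*$ with $M=F^\perp$ and $X/M$ a two-dimensional quotient of $X$, hence uniformly micro-semitransitive with witness $\beta$; by the proof of Proposition~\ref{prop-appendix-duality}, $F$ is then uniformly micro-semitransitive with witness $\eps\mapsto\delta_{X/M}(\beta(\eps))$, which dominates the \emph{fixed} function $\beta_\sharp(\eps):=\delta_X(\beta(\eps))$ (by the previous paragraph); moreover $F$ is isomorphic to a Hilbert space with Banach--Mazur distance $\leq\sqrt2$ and $\delta_F\geq\delta_{X^*}$. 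I would then apply the ``moreover'' part of Corollary~\ref{cor-charact-BPBp-BPBpp} to each such $E$ and each such $F$. Since the constant it furnishes depends only on (a lower bound for) the modulus of convexity of the space, on the uniform micro-semitransitivity witness, and on the Banach--Mazur distance to the Hilbert space, the above uniformity in all three data yields constants $C_1,C_2>0$, independent of $E$ and $F$, such that $\delta_E(\eps)\geq C_1\eps^2$ and $\delta_F(\eps)\geq C_2\eps^2$ for all $\eps\in(0,2)$. Taking infima over $E$ and over $F$ gives $\delta_X(\eps)\geq C_1\eps^2$ and $\delta_{X^*}(\eps)\geq C_2\eps^2$, so $C:=\min\{C_1,C_2\}$ does the job. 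For the last assertion, $\delta_X$ of power type $2$ implies $X$ has (Rademacher) cotype $2$, while $\delta_{X^*}$ of power type $2$ is equivalent, via the duality formula above, to $\rho_X$ being of power type $2$, which implies $X$ has type $2$; by Kwapień's theorem \cite{Kw} a Banach space with type $2$ and cotype $2$ is isomorphic to a Hilbert space.

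The step I expect to be the \emph{main obstacle} is precisely the uniformity of the constant in Corollary~\ref{cor-charact-BPBp-BPBpp}: one has to verify that its dependence --- which runs through the function $\eta$ of Proposition~\ref{prop:results-BPBp}(b), the function $\tilde\eta$ of Proposition~\ref{prop2}, and the constant of Proposition~\ref{omnibus}(c) --- on the modulus of convexity and on the Banach--Mazur distance to the Hilbert space is monotone, so that passing from $\delta_E$ to the smaller \emph{fixed} function $\delta_X$ (and from $\delta_F$ to $\delta_{X^*}$), and bounding the Banach--Mazur distance by $\sqrt2$, only worsens the constant. This monotonicity is visible from the explicit shape of those functions but should be spelled out. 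Alternatively, one can obtain the required uniform positive lower bound for the moduli of convexity of the two-dimensional subspaces and quotients by a compactness argument: the class of two-dimensional normed spaces that are uniformly micro-semitransitive with a fixed witness has compact closure in the two-dimensional Banach--Mazur compactum, on which the modulus of convexity is lower semicontinuous and, by Corollary~\ref{corollary-quasi-transitive=>unif-smooth}, strictly positive; hence it admits a strictly positive lower bound depending only on the witness, which can then be inserted into Corollary~\ref{cor-charact-BPBp-BPBpp}.
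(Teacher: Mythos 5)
Your proof is correct and follows essentially the same route as the paper: reduce to two-dimensional subspaces of $X$ and, via reflexivity and Proposition~\ref{prop-appendix-duality}, to duals of two-dimensional quotients of $X$ for the $X^*$ half, then apply the ``moreover'' part of Corollary~\ref{cor-charact-BPBp-BPBpp} with the common witness and John's theorem, and finish with Pisier's results and Kwapie\'n's theorem. Your explicit common witness $\beta_\sharp=\delta_X\circ\beta$ for the two-dimensional subspaces of $X^*$ (using $\delta_{X/M}\geq\delta_X$) and your remark on the monotone dependence of the constant merely make precise steps the paper leaves implicit.
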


\begin{proof}
Observe that both $X$ and $X^*$ are uniformly convex and uniformly smooth. Let $Y$ be a two dimensional subspace of $X$. Then $Y$ is uniformly convex with a modulus of convexity smaller than or equal to $\delta_X$ and its norm is uniformly micro-semitransitive witnessed by the function $\beta$. Moreover, the Banach-Mazur distance from $Y$ to $\ell_2^{(2)}$ is bounded (see \cite{Jo}). Therefore, Corollary \ref{cor-charact-BPBp-BPBpp}.e gives us that there is a constant $C>0$, not depending on $Y$, such that
\[
\delta_{Y}(\eps) \geq C\, \eps^2 \qquad (0<\eps<2).
\]
Since the above inequality holds for every two-dimensional subspace $Y$ of $X$, we get that $\delta_{X}(\eps) \geq C \eps^2$ and so, $X$ has cotype 2 \cite{Pi}.

Next, $X^*$ is uniformly convex and uniformly smooth, its norm is uniformly micro-semitransitive by Proposition \ref{prop-appendix-duality}, and the norm of all of its two-dimensional subspaces is uniformly micro-semitransitive witnessed the common function $\beta$ by Proposition \ref{prop-appendix-duality} and the hypothesis on the quotients of $X$. Therefore, the same argument above can be applied to all two-dimensional subspaces of $X^*$ to get that $X^*$ is uniformly convex of power type $2$, so $X$ is uniformly smooth of power type $2$. Therefore, $X$ has type $2$ \cite{Pi}. Finally, Kwapien's theorem \cite{Kw} says that $X$ is isomorphic to a Hilbert space.
\end{proof}

\begin{remark-final}
The recent paper \cite{BecerraRodriguez2019} considers quantitative, weak versions of uniform micro-semitransitivity and contains a stronger version of Corollary~\ref{corollary-quasi-transitive=>unif-smooth}.
\end{remark-final}

\end{document}